\documentclass[11pt,reqno]{amsart}
\usepackage[english]{babel}
\usepackage[utf8]{inputenc}
\usepackage[T1]{fontenc}
\usepackage{amsmath,amsfonts,amssymb,amsthm}
\usepackage{array}
\usepackage{mathdots}
\usepackage{mathtools} % dcases environment and others
\usepackage{hyperref}
\usepackage{enumitem}
\usepackage[capitalise]{cleveref} 
\usepackage{graphicx}
\usepackage{tikz}
\usetikzlibrary{arrows,shapes,positioning,decorations.markings,decorations.pathreplacing,calc}
\tikzset{->-/.style={decoration={
  markings,
  mark=at position .5 with {\arrow{>}}},postaction={decorate}}}
  
\selectfont

\DeclareUnicodeCharacter{2264}{\ensuremath{\leqslant}}
\DeclareUnicodeCharacter{2265}{\ensuremath{\geqslant}}
\DeclareUnicodeCharacter{2260}{\ensuremath{\neq}}

\theoremstyle{plain}
\newtheorem{lemme}{Lemma}[section]
\newtheorem{theoreme}[lemme]{Theorem}
\newtheorem{corollaire}[lemme]{Corollary}
\newtheorem{proposition}[lemme]{Proposition}

\theoremstyle{definition}
\newtheorem{remarque}[lemme]{Remark}
\newtheorem{remarques}[lemme]{Remarks}
\newtheorem{exemple}[lemme]{Example}

\numberwithin{figure}{section}
\numberwithin{table}{section}
\numberwithin{equation}{section}

\crefname{section}{Section}{Sections} 
\crefname{table}{Table}{Tables} 

\AddToHook{env/proposition/begin}{\crefalias{lemme}{proposition}}
\AddToHook{env/theoreme/begin}{\crefalias{lemme}{theoreme}}
\AddToHook{env/corollaire/begin}{\crefalias{lemme}{corollaire}}
\AddToHook{env/remarque/begin}{\crefalias{lemme}{remarque}}
\AddToHook{env/remarques/begin}{\crefalias{lemme}{remarques}}
\AddToHook{env/exemple/begin}{\crefalias{lemme}{exemple}}

\newlist{rom}{enumerate}{1}% liste romaine 
\setlist{topsep= 2pt plus 1pt minus 1pt,itemsep= 1pt plus 1pt minus 1pt}
\setlist[itemize]{leftmargin=\parindent}
\setlist[itemize,1]{label=$\bullet$}% 
\setlist[itemize,2]{label=$\circ$}% 
\setlist[enumerate,1]{leftmargin=2em,label=\arabic*), ref=\arabic*,itemjoin*=\quad}
\setlist[rom]{label=(\roman*)}

\newcommand{\R}{\mathbb{R}}
\newcommand{\Z}{\mathbb{Z}}
\newcommand{\C}{\mathbb{C}}

\newcommand{\Q}{\mathbb{Q}}

\newcommand{\PSL}{\mathrm{PSL}}

\newcommand\al{\alpha}
\newcommand\be{\beta}
\newcommand\de{\delta}
\newcommand\De{\Delta}

\newcommand\ga{\gamma}

\newcommand\ka{\kappa}

\newcommand{\ov}[1]{\overline {#1}}

\newcommand\bigo{\mathrm{O}}
\newcommand\ip{\ensuremath{\mathrm{i}}}

\DeclareMathOperator{\re}{Re}

\makeatletter
\DeclareRobustCommand{\CF}{\DOTSB\gaussk@\slimits@}
\newcommand{\gaussk@}{\mathop{\vphantom{\sum}\mathpalette\bigcal@{K}}}

\newcommand{\bigcal@}[2]{%
  \vcenter{\m@th
    \sbox\z@{$#1\sum$}%
    \dimen@=\dimexpr\ht\z@+\dp\z@
    \hbox{\resizebox{!}{0.8\dimen@}{$\mathcal{K}$}}%
  }%
}
\newcommand{\cfp}{\,\mathrel{\cfracplus@}\,}% mathrel pour avoir plus d'espace
\newcommand{\cfracplus@}{%
  \sbox\z@{$\dfrac{1}{1}$}%
  \sbox\tw@{$+$}%
  \raisebox{\dimexpr\dp\tw@-\dp\z@\relax}{$+$}%
}
\newcommand{\cfd}{\mathord{\cfracdots@}}
\newcommand{\cfracdots@}{%
  \sbox\z@{$\dfrac{1}{1}$}%
  \sbox\tw@{$+$}%
  \raisebox{\dimexpr\dp\tw@-\dp\z@\relax}{$\cdots$}%
}
\makeatother

\let\fr=\frac

\DeclarePairedDelimiter{\ceil}{\lceil}{\rceil}% utilise mathtools 
\DeclarePairedDelimiter{\floor}{\lfloor}{\rfloor}% utilise mathtools 

\newcommand\matching[2]{%
  \begin{tikzpicture}
    \draw(1,0) -- ++ (#1-1,0);
    \foreach \x in {1,...,#1}{
       \draw[circle,fill] (\x,0)circle[radius=0.7mm]node[below]{};% si label {$\x$};
    }
    \foreach \x/\y in {#2} {
       \draw(\x,0) to[bend left=45] (\y,0);
    }
  \end{tikzpicture}%
}

\begin{document}

\title[Analytical properties of $q$-metallic numbers]
{Analytical properties of $q$-metallic numbers}
\author{Emmanuel Pedon}
\address{Emmanuel Pedon,
Universit\'e de Reims Champagne-Ardenne,
CNRS, LMR, UMR~9008, Reims, France} 
\email{emmanuel.pedon@univ-reims.fr}
\date{21-04-2026}
\keywords{$q$-analogues, analytic combinatorics, golden ratio, metallic numbers, RNA secondary structures, continued fractions.}
\subjclass{Primary: 05A15, 05A30, 11B37. Secondary: 05A16, 11A55, 37J70, 92D20.}

\begin{abstract}
For an integer $n≥1$,  consider the $n$-th \emph{metallic number} 
$$\phi_n=\frac{n+\sqrt{n^2+4}}{2}$$
(e.g. $\phi_1$ is the golden number) and denote by $[\phi_n]_q$  its $q$-deformation in the sense of S.~Morier-Genoud \& V.~Ovsienko. This is an algebraic continued fraction which admits an expansion into a power series $$[\phi_n]_q =\sum_{l=0}^{+\infty} \kappa_l(\phi_n) q^l$$ around $q=0$, with integral coefficients.\\
By using  techniques from analytic combinatorics, we establish  several properties of the sequence $( \kappa_l(\phi_n))_{l≥0}$ of Taylor coefficients:  characterisation by recurrences or by differential equations, closed-form expressions when $n=1,2,3$,  and asymptotics. We also present some remarkable identities induced by the action of the modular group $\PSL(2,\Z)$ and address,  mainly through computer experimentations, the question of the logarithmic behaviour of the sequence $( \kappa_l(\phi_n))_{l≥0}$. \\
A particular accent is put on the comparison between the $q$-deformation $[\phi_1]_q$ of the golden ratio and RNA secondary structures, the former being actually a signed version of the latter. By doing so, we would be pleased to bring the interest of combinatoricians to the newly discovered world of $q$-numbers.
\end{abstract}

\maketitle
\thispagestyle{empty}

%%%%%%%%%%%%%%%%%%%%%%%%%%%%%%%%%%%%%%%%%%%%%%%%%%%%%%%%%%%%%%%%%%%
\section{Introduction}\label{sec-intro}
%%%%%%%%%%%%%%%%%%%%%%%%%%%%%%%%%%%%%%%%%%%%%%%%%%%%%%%%%%%%%%%%%%%

In this article we study  analytical properties of certain sequences of integers which are constructed as ``quantisations'' or ``$q$-deformations'' of the so-called \emph{metallic numbers}, or \emph{metallic ratios}, or \emph{metallic means}. Recall that these  numbers are the quadratic irrationals

\begin{equation}\label{defphin}
\phi_n:=\frac{n+\sqrt{n^2+4}}{2}
 =n+\fr{1}{n}\cfp\fr{1}{n}\cfp\fr{1}{n}\cfp\cfd
 =n+\left(\fr 1{n}\cfp\right)^* \qquad (n≥1)
\end{equation} 
with most famous representatives:
\begin{align*}
	\phi_1&=\frac{1+\sqrt{5}}{2}\qquad\text{(golden ratio)},\\
	\phi_2&=1+\sqrt{2}\qquad\text{(silver ratio)},\\
	\phi_3&=\frac{3+\sqrt{13}}{2}\qquad\text{(bronze ratio)}.
\end{align*}
Here and thereafter, we use the following classical notation for infinite continued fractions:
\begin{equation*}
\fr{\al_0}{\be_0}\cfp\fr{\al_1}{\be_1}\cfp\fr{\al_2}{\be_2}\cfp\cfd\cfp\fr{\al_{p-1}}{\be_{p-1}}\cfp\fr{\al_p}{\be_p}\cfp\cfd:=
\cfrac{\al_0}{\be_0+\cfrac{\al_1}{\be_1+\cfrac{\al_2}{\ddots+\cfrac{\al_{p-1}}{\be_{p-1}+\cfrac{\al_p}{\be_p+\cdots}}}}}
\end{equation*}
and indicate periodic data between  parentheses and with a star superscript $*$.

A few years ago, in the two seminal papers \cite{MGO20}  and \cite{MGO22}, S.~Morier-Genoud and V.~Ovsienko have invented a fascinating theory of $q$-deformation of rational and real numbers, which has rapidly became a  subject of great interest because of its instance in many branches of mathematics\footnote{The David~P.~Robbins prize of the AMS was awarded to the two authors in 2025 for their article \cite{MGO20}.}: born in the fields of enumerative combinatorics and cluster algebras, the theory has touched number theory, Markov-Hurwitz approximation theory, braid groups, combinatorics of posets, Lie algebras of differential operators,  Calabi-Yau triangulated categories, supersymmetry and  supergeometry, link invariants, etc.; among many recent works, see e.g. \cite{BBL23,CO23,KOM23,MO24,MGOV24,Thomas24,KMRWY25,Jouteur25,MPS,HP,Ovsienko,AL,JQ}.

Some introduction to this theory will be given in \cref{sec-qreals}, but we can explain already how are constructed the deformations of metallic ratios. Let $q$ be a formal parameter, for the time being (later we may  as well consider $q$ as a complex variable). 
By definition, the $q$-deformation of the number $\phi_n$ is the following quantisation of the continued fraction \eqref{defphin}:
\begin{equation}\label{defPhin}
[\phi_n]_q:=
[n]_q+\left(\fr{q^n}{[n]_{q^{-1}}}\cfp\fr{q^{-n}}{[n]_q}\cfp\right)^*
\end{equation}
where the polynomial $[n]_q$ stands for the classical Euler-Gauss $q$-deformation of the integer $n$:
\begin{equation*}
[n]_q:=1+q+q^2+\cdots+q^{n-1}.
\end{equation*}
As expected when speaking of quantisation, letting $q\to 1$ in \eqref{defPhin} gives back expression \eqref{defphin} of $\phi_n$.
We will refer to these algebraic continued fractions $[\phi_n]_q$ as  \emph{the $q$-deformed metallic numbers}, or simply \emph{the $q$-metallic numbers}. 

One first consequence of S.~Morier-Genoud and V.~Ovsienko's theory is that the $q$-metallic number $[\phi_n]_q$ admits a formal power series expansion having \emph{integer coefficients}. In other words, for any $n≥1$ there exists a sequence of integers $\ka_l(\phi_n)$, $l≥0$, such that
\begin{equation*}
[\phi_n]_q=\sum_{l=0}^{+\infty}\ka_l(\phi_n) q^l.
\end{equation*}
In the articles \cite{OP25} and \cite{HP} were  established  striking properties of the family of sequences $(\ka_l(\phi_n))_{l≥0}$, concerning their continued fraction expansions and Hankel determinants (some of the main results will be recalled  in \Cref{rem-Hankel}). Moreover, these facts established the existence of strong connections with three highly famous sequences of the combinatorial world:  {Catalan numbers}, {Motzkin numbers} and enumeration of {secondary structures of RNA molecules}. 

As an example, let us make clear  the connection between the latter and the $q$-golden number $[\phi_1]_q=\bigl[\fr{1+\sqrt{5}}2\bigr]_q$, since it is the very starting point of the present work. Definition \eqref{defPhin} can easily be  rewritten as
\begin{equation*}
[\phi_1]_q=1+\left(\fr{q^2}{q}\cfp\fr{1}{1}\cfp\right)^*
\end{equation*}
and leads successively to the  functional equation
\begin{equation}
\label{GREq}
q\,[\phi_1]_q^2+
\left(1-q-q^2 \right)[\phi_1]_q =1
\end{equation}
and to the explicit expression
\begin{equation}
\label{GGF}
[\phi_1]_q=
\frac{-1+q+q^2+\sqrt{(1-q+q^2)(1+3q+q^2)}}{2q}.
\end{equation}
With the help of a computer, one sees that the $q$-golden number admits the following Taylor series expansion about $q=0$:
\begin{equation} \label{SEPhi1}
\begin{split}
[\phi_1]_q&=
1 + q^2 - q^3 + 2 q^4 - 4 q^5 + 8 q^6 - 17 q^7 + 37 q^8 - 82 q^9+185 q^{10}\\
&\qquad - 423 q^{11}  + 978 q^{12}-2283q^{13}+ 5373q^{14}-12735q^{15}
+ 30372q^{16}+ \cdots\\
\end{split}
\end{equation} 
It is quite a surprise, discovered in \cite{MGO22}, that the coefficients here appear also, up to a sign and a shift, as coefficients $a_l$ of the sequence numbered as A004148 in the Encyclopedia of Integer Sequences \cite{oeis}. Indeed, the generating series $A(q)$ of the sequence $(a_l)_{l≥0}$ reads
\begin{equation*} 
\begin{split}
A(q)&=
1 + q+ q^2 + 2 q^3 + 4 q^4 + 8 q^5 + 17 q^6 + 37 q^7 + 82 q^8+185 q^{9}+ 423 q^{10}\\
&\quad + 978 q^{11}+2283q^{12}+ 5373q^{13}+12735q^{14} + 30372q^{15}+ \cdots\\
\end{split}
\end{equation*} 
A glance at the generating function
\begin{equation*}
A(q)=\frac{1-q+q^2-\sqrt{(1+q+q^2)(1-3q+q^2)}}{2q^2}
\end{equation*}
shows  that
\begin{equation}\label{G=A2}
[\phi_1]_q=1+q-q\,A(-q).
\end{equation}
In other words, $\ka_l(\phi_1)$ can be thought of as a signed (and shifted) version of  $a_l$:
\begin{equation}
\label{G=A}
\ka_l(\phi_1)=(-1)^l a_{l-1}\quad\text{for all }l≥2.
\end{equation}

In the EIS, the numbers $a_l$ are  named ``generalized Catalan numbers'' and are credited of
 many  combinatorial interpretations and properties. They first arose in the enumeration of secondary structures of RNA molecules and the link with such an important topic in biology has certainly encouraged an important amount of mathematical work on the subject, all the more so because of the multiplicity of models successively appeared: planar vertex-labeled graphs, rooted ordered trees, peakless Motzkin paths, zigzag knight's paths and other special Dyck paths, $2$-noncrossing digraphs, Narayana triangle, etc.; among many works, we may cite \cite{Waterman78,SW79,HSS98,DS02,DSV04,JQR08,Barry11,ABR20,Prodinger23,BR23,BFR24}. 

Let us recall from \cite{Waterman78} M.S.~Waterman's mathematical definition of a \emph{RNA secondary  structure of size $l≥1$}: a graph,  without loops and multiple edges, on the vertex set $\{1,2,\ldots,l\}$ representing $l$ \emph{nucleotides} or \emph{bases}, such that:
\begin{rom}
\item $\{a,a+1\}$ is an edge for all $1≤a≤l-1$; these $l-1$ edges are called \emph{p-bonds};
\item for each vertex $a$, there is at most one vertex $b$  such that $|b-a|≥2$ and $\{a,b\}$ is an edge; such an edge is called an \emph{h-bond}\footnote{In short, the biological meaning is as follows: there exist 4 types of bases (cytosine, guanine, adenine and uracil); p-bonds (`p' for phosphate) link bases together to form the $1$-dimensional ribose-phosphate backbone of the RNA molecule; h-bonds (`h' for hydrogen) may exist between certain pairs of bases only (typically, cytosine and guanine, or adenine and uracil; also sometimes between guanine and uracil). Other interactions can occur but contribute only to the 3D \emph{tertiary structure} of the RNA.};
\item if $\{a,b\}$ and $\{c,d\}$ are two edges with $a<c<b$, then $a<d<b$; this means that h-bonds do not cross\footnote{As remarked in \cite{DS02}, a  secondary  structure of size $l$ can be thought of as a \emph{noncrossing partition} of $\{1,2,\ldots,l\}$ satisfying requirement (ii).}.
\end{rom}
The number $a_l$ introduced above\footnote{Notice that $a_l$ is denoted by $S(l)$ in \cite{Waterman78}, by $S_l(1)$ in \cite{SW79}, by $S^{(1)}_l$ in \cite{DSV04} and by $S_2(l)$ in \cite{JR08}.} represents the number of different RNA secondary  structures of size $l$.
 
Some refinement can be introduced in the previous definition  by requiring that h-bonds must have vertices at distance $>n$ for some integer $n$. This integer is called the \emph{rank} of the graph in \cite{DSV04}. Thus, secondary structures of rank $1$ consist exactly of all secondary  structures. But the remarkable point is that both Catalan numbers and Motzkin numbers can be viewed as limit cases (with no biological reality) in enumeration of secondary structures, of rank $-1$ and $0$, respectively; see \cite{SW79,DSV04} for details\footnote{This probably explains the denomination of ``generalized Catalan numbers'' used by the EIS in accordance with the title of \cite{SW79}.}.

An example of secondary  structure is given in \Cref{fig-ss}. As is well-known, one can  easily make it correspond to a \emph{peakless Motzkin path}, see \Cref{fig-Motzkin}.

\begin{figure}[ht!]
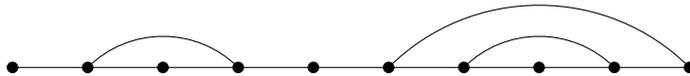

\matching{10}{2/4, 6/10, 7/9}
\caption{\small{An example of secondary structure; dots, lines and arcs represent bases, p-bonds and h-bonds, respectively.}}
\label{fig-ss}
\end{figure}

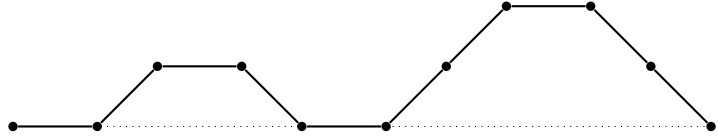
\begin{figure}[ht!]
\centering
\begin{tikzpicture}
	[scale=0.8, auto=left, every node/.style = {circle, fill=black, inner sep=1.25pt}]
	\def\u{0.4}% rac(2)-1 pour que traits horizontaux ait même longeur que diagonaux
	\node(0) at (-\u,0) {};
	\node(1) at (1,0) {};
	\node(2) at (2,1) {};
	\node(3) at (3+\u,1) {};
	\node(4) at (4+\u,0) {};
	\node(5) at (5+2*\u,0) {};
	\node(6) at (6+2*\u,1) {};
	\node(7) at (7+2*\u,2) {};
	\node(8) at (8+3*\u,2) {};
	\node(9) at (9+3*\u,1) {};
	\node(10) at (10+3*\u,0) {};
	\draw[dotted, color = black] (0) to (10);
	\foreach \x in {0,...,9} {
       	\draw[thick,black] (\x) to (node cs: name/.evaluated={int(\x+1)});
    }
\end{tikzpicture}
\caption{\small{The  Motzkin path associated with the secondary structure of \cref{fig-ss}. To each base in the secondary structure, from left to right, is associated a step as follows: a level (east) step if the base is isolated, an up (north-east) step if an arc starts from the base and a down (south-east) step if an arc ends at the base. The result is a \emph{peakless} path, i.e. an up step is never immediately followed by a down step.}}
\label{fig-Motzkin}
\end{figure}

\begin{remarque}
As explained above, secondary structures of size $l$ can be parametrised by their rank $n$ and form this way a decreasing family $({\mathcal S}^{n}_l)_{n≥1}$ of graphs sets, whose first element ${\mathcal S}^{1}_l$ is the whole set of all secondary structures of size $l$, and has cardinality $a_l$.
On the other hand, the $q$-golden number $[\phi_1]_q$ is also a member of the infinite family $([\phi_n]_q)_{n≥1}$ consisting of all $q$-metallic numbers. But it turns out that these two families have no other intersection than the couple RNA secondary structures/$q$-golden number: general $q$-metallic numbers define objects that are  more complicated in nature, as will be seen on explicit expressions given in \cref{sec-formulas} for $n=2$ and $n=3$, to be compared to Theorem~3.1 in \cite{DSV04}.
\end{remarque}

Let us come back to the sequence $(a_l)$. It enjoys quite a lot of nice properties, among which we emphasize the following:
\begin{enumerate}[label=\Alph *)]
\item \label{prop-conv}\emph{Convolution recurrence, functional equation, generating function  \cite{Waterman78}:} they are easily obtained the one from the other and are given, respectively, by the formulas
\begin{align}
&a_0=a_1=1,\qquad a_{l+1} = a_{l} + \sum_{j=0}^{l-2} a_j a_{l-j-1}\quad (l≥2),\label{conv-A}\\
&q^2 A(q)^2 - (q^2 - q + 1)A(q) + 1 = 0\notag
\end{align}
and
\begin{equation*}
A(q)=\frac{1-q+q^2-\sqrt{(1+q+q^2)(1-3q+q^2)}}{2q^2}.
\end{equation*}
\item \label{prop-rec}\emph{$P$-recursivity recurrence \cite{DSV04}:} for all $l≥4$, one has 
\begin{equation}\label{Prec-A}
(l+2)a_l-(2l+1)a_{l-1}-(l-1)a_{l-2}-(2l-5)a_{l-3}+(l-4)a_{l-4}.
\end{equation}
\item \label{prop-expl}\emph{Explicit expression in terms of binomial coefficients \cite{DSV04,BR23,BFR24}:} 
\begin{equation}\label{al}
a_l=\sum_{k=1}^{\floor{l/2}} \fr 1{l-k}\binom{l-k}{k} \binom{l-k}{k-1}=(-1)^l \sum_{k=1}^{\floor{l/2}}N(l-k,k),\quad l≥2,
\end{equation}
where $N(j,k)$ stands for Narayana number\footnote{The definition of Narayana numbers may slightly vary in the literature, we follow here  \cite{StanleyEC2} p.256. Recall that the sum $\sum_{k=1}^j N(j,k)$ equals $C_j$, the $j$-th Catalan number.}
\begin{equation}\label{narayana}
N(j,k)=\fr 1{k}\binom{j}{k} \binom{j}{k-1}\qquad (1≤k≤j).
\end{equation}
\item \label{prop-asy}\emph{Asymptotics \cite{SW79,HSS98,DSV04,JR08}:} when $l$ tends to $+\infty$,
\begin{equation}
a_l \sim \fr{5^{1/4}}{2\sqrt{\pi}}\biggl(\fr{1+\sqrt{5}}2\biggr)^{2l + 2}l^{-3/2}.
\end{equation}
\item \label{prop-log}\emph{Logarithmic behaviour \cite{DSV04}:} the sequence $(a_l)_{l≥6}$ is log-convex.
\item \label{prop-enum}\emph{Enumerative interpretation \cite{Waterman78}:}  $a_l$ enumerates RNA secondary structures of size $l$.
\end{enumerate}

\medbreak \noindent
\textbf{Main results and organisation of the paper.} 
The proximity  of $(a_l)$ with the sequence $(\ka_l(\phi_1))$ of coefficients  of the $q$-golden ratio $[\phi_1]_q$, demonstrated by \eqref{G=A2} and \eqref{G=A}, yields immediately similar properties \ref{prop-conv} to \ref{prop-enum} for the latter.  The main purpose of this article is to obtain analogous results for all $q$-metallic numbers $[\phi_n]_q$, $n≥1$. 

A short account of the results is the following: concerning items~\ref{prop-conv}, \ref{prop-rec} and \ref{prop-asy} we were successful for all $n≥1$. For item~\ref{prop-expl} the goal was achieved only for small values $n\in\{1,2,3\}$, and for items~\ref{prop-log} and \ref{prop-enum} nothing could  be stated beyond case $n=1$. On the other hand, we have  discovered other interesting phenomena, for which there is no analogue for RNA secondary structures. Also, it turns out that several of our results apply  to $q$-deformations of general quadratic irrational numbers.

Let us explain the organisation of our paper and give a more elaborate report on the results.

We do not assume the reader familiar with the notion of $q$-numbers invented by S.~Morier-Genoud and V.~Ovsienko, and we devote \cref{sec-background} to the main definitions and facts that can help for a better understanding of the $q$-metallic numbers treated in this article. 

In \cref{sec-rec} we first derive easily from the functional equation the convolution recurrence satisfied by a $q$-metallic ratio $[\phi_n]_q$,  and then we give a much shorter characterising recurrence which transcribes the $P$-recursivity of the sequence of coefficients. These are the perfect analogues, for all $n≥1$, of items \ref{prop-conv} and \ref{prop-rec}. Since $P$-recursivity is equivalent to $D$-finiteness of the function $q\mapsto [\phi_n]_q$, we also express the $q$-metallic numbers as solutions of a differential equation.

Then,  analogues of item~\ref{prop-expl}, i.e. closed-form expressions for the coefficients $\ka_l(\phi_n)$ are  provided in \cref{sec-formulas}, when $n=1,2,3$ only, i.e. for deformations of the golden, the silver and the bronze ratios, due to growing complexity in  calculations when $n$ increases. This is done by applying Lagrange inversion formula to the defining functional equations of the $q$-deformations. Our calculations involve also ordinary Bell polynomials.

\Cref{sec-asymptotics} is devoted to the asymptotics of the sequence of coefficients $\ka_l(\phi_n)$. Namely, we will use the principles of the theory of \emph{singularity analysis} developed by Ph.~Flajolet and A.M.~Odlyzko to obtain generic estimates of coefficients $\ka_l(\phi_n)$, for any $n≥1$, and we make these estimates more explicit in cases $n=1,2,3$, where the  locus of the singularities of the generating function (and the value of the radius of convergence of the series $[\phi_n]_q]$) can be exactly determined. Even better, we shall notice that the form of asymptotics remain the same for the $q$-deformations of any quadratic irrational number.

In \cref{sec-identities} we study the effect of certain symmetries on the quantisation of metallic ratios. Namely, we show how the $\PSL(2,\Z)$-equivariance of the quantification map $x\mapsto [x]_q$ can be used to produce simple relations between the $q$-deformations of $\phi_n$, $-\phi_n$, $1/\phi_n$ and $-1/\phi_n$, and between their Laurent coefficients at $q=0$. We discuss also a possible generalisation of these results to $q$-deformations of more general quadratic irrational numbers. On the other side, we establish a link between the deformations of $[\phi_n]_q$ and $[\phi_n]_{q^{-1}}$.

We end our article in \cref{sec-log} with  considerations on the logarithmic behaviour of the sequence of coefficients $\ka_l(\phi_n)$. Thanks to the link \eqref{G=A} between the $q$-golden number and the RNA secondary structure sequence, it is easily proven that item~\ref{prop-log} remains valid for the sequence $(\ka_l(\phi_1))$. But we could not success in the study of cases $n≥2$, essentially because all techniques available in the literature seem (as far as we know) to apply only  to sequences of \emph{positive} numbers. However, we present and comment the results of some computer experiments.

Neither were we able  to find a simple enumerative interpretation of  sequences $(\ka_l(\phi_n))$ (or of their absolute values) for $n≥2$, as in \ref{prop-enum} when $n=1$; yet for  values $n=2$ or $3$, the distribution of signs in the sequence seems rather complicated, and absolute values are not forming a monotonic sequence as in case $n=1$, see \Cref{ex-met}. Actually, it is still an open (and maybe difficult) question to find combinatorial interpretations of $q$-irrational numbers, in general: apart from the case of the $q$-golden ratio $[\phi_1]_q$ which is, as explained above, a kind of signed companion of secondary structures, there is no known example at present. For $q$-rationals, on the contrary, there is plethora of combinatorial models or applications, see \cref{sec-qreals} for a review of some of them.

\medbreak \noindent
\textbf{Acknowledgements.} 
We are grateful to Perrine Jouteur for sharing to the communauty her useful SageMath package dedicated to $q$-numbers \cite{JouteurSage}.

%%%%%%%%%%%%%%%%%%%%%%%%%%%%%%%%%%%%%%%%%%%%%%%%%%%%%%%%%%%%%%%%%%%
\section{Background material}\label{sec-background}
%%%%%%%%%%%%%%%%%%%%%%%%%%%%%%%%%%%%%%%%%%%%%%%%%%%%%%%%%%%%%%%%%%%

%%%%%%%%%%%%%%%%%%%%%%%%
\subsection{A short introduction to $q$-numbers} \label{sec-qreals}
%%%%%%%%%%%%%%%%%%%%%%%%

Our article is devoted to properties of \emph{$q$-deformation} of particular real numbers. Let us define precisely what kind of deformation is meant here and give a brief introduction to the subject.

Quantisation of integers is a very classical topic which goes back to Euler and Gauss:  any integer $n≥0$ can be deformed as a polynomial in one variable $q$
\begin{equation}\label{q-ent}
[n]_q:=1+q+q^2+\cdots+q^{n-1}
\end{equation}
such that $[n]_1=n$. Equivalently, one can define $[n]_q$ as the unique solution of the recurrence 
\begin{equation}\label{rec}
[n+1]_q=q[n]_q+1,\qquad [0]_q=0.
\end{equation}
Using \eqref{rec} backwards, one can easily guess  a similar formula for the $q$-deformation of negative integers $n<0$:
\begin{equation}\label{q-ent-neg}
[n]_q:= -q^{-1}-q^{-2}-\cdots -q^{-n}
\end{equation}
and this gives a polynomial in $q^{-1}$. Note that expressions \eqref{q-ent} and \eqref{q-ent-neg} can be unified under the form
\begin{equation*}
[n]_q=\frac{1-q^n}{1-q}, \quad n\in\Z,
\end{equation*}
if $q$ differs from $1$.
A considerable amount of related objects have been based on these definitions, e.g. $q$-factorials, $q$-binomials, $q$-hypergeometric functions, $q$-calculus, etc. used in combinatorics, number theory, fractals, operator theory, mathematical physics… But until recently we missed a really satisfactory extension to more general numbers (reals, or even just rationals).

Several equivalent models are available to define Morier-Genoud \& Ovsienko's \emph{$q$-real numbers}. One of them involves continued fractions and is probably the simplest to state.

Let $x\in\R$. Keeping notation given in the introduction, we consider its regular continued fraction expansion (finite if and only if $x\in\Q$)
$$
x\;=\;
a_0 + \frac{1}{a_1}\cfp\frac{1}{a_2}\cfp\frac {1}{a_3}\cfp\cfd
$$
where the $a_i$'s are integers, positive for $i≥1$. The \emph{$q$-deformation} or \emph{$q$-analogue} of~$x$ is  the following algebraic continued fraction:
\begin{equation}\label{defq}
[x]_q:=[a_0]_{q} + 
	\frac{q^{a_{0}}}{[a_1]_{q^{-1}}}\cfp \frac{q^{-a_{1}}}{[a_{2}]_{q}}\cfp
	\frac{q^{a_{2}}}{[a_3]_{q^{-1}}}\cfp \frac{q^{-a_{3}}}{[a_{4}]_{q}}
\cfp\cfd
\end{equation}
where $[n]_q$ stands for the $q$-integer as in~\eqref{q-ent}, and $[n]_{q^{-1}}=q^{1-n}[n]_q$ is the same expression with reciprocal parameter.
When infinite, the right-hand side in \eqref{defq} always converges in the $q$-adic sense in the field $\R(\!(q)\!)$ of formal Laurent series in $q$. We shall denote by $\ka_l(x)$ the coefficients of this series, i.e.  we write
\begin{equation}\label{kappa}
[x]_q=\sum_{l≥l_0}\ka_l(x)q^l
\end{equation}
where $l_0$ is some integer depending on $x$.

Let us indicate some important characteristics of these $q$-numbers; we cite  original references but most of  properties are explained also in  the recent survey \cite{MGO}. 
\begin{enumerate}[wide]
\item \emph{Integrality of coefficients}.  For any $x\in\R$, the Laurent series $[x]_q$ has integer coefficients $\ka_l(x)$ \cite{MGO22}. More preciseley, the quantification map $[\,\cdot\,]_q$ sends:
\begin{itemize}[leftmargin=3em]
\item a nonnegative integer (resp. a negative integer) to the polynomial in $q$ defined by \eqref{q-ent} (resp. to the polynomial in $q^{-1}$ defined by \eqref{q-ent-neg});
\item a nonnegative rational number to a fractional function with nonnegative integer coefficients;
\item a rational number to a fractional function with integer coefficients;
\item a nonnegative real number to a power series with integer coefficients.
\end{itemize}
\item \emph{Combinatorial models for $q$-rationals}. If $\fr r{s}\in\Q$,  its deformation $[\fr r{s}]_q$ takes the form of a fractional function $\fr{\mathcal R(q)}{\mathcal S(q)}$, where the polynomials $\mathcal R$ and $\mathcal S$ have positive integer coefficients and depend both on $r$ and $s$ \cite{MGO20}. A great success of the theory is  that  $q$-rationals enjoy several combinatorial models. For instance, they can be produced inductively from a weighted version of the Farey graph, exactly as  rational numbers are produced from the usual Farey graph \cite{MGO20}. Also, polynomials $\mathcal R$ and $\mathcal S$ describe rank generating functions of certain partially ordered sets \cite{MGO20} and enumerate perfect matchings inside weighted snake graphs \cite{Ovsienko} as well as points in Schubert cells in Grassmannians over a finite field \cite{Ovenhouse23}.
\\
On the other hand, $q$-irrationals are obtained as Laurent series by a limit process and, except in the case of the $q$-golden number $[\phi_1]_q$ which is, as explained in the introduction, a signed version of secondary structures, giving them an enumerative interpretation is a  challenging problem. 

\item \emph{Modular equivariance and other group symmetries}. We have, for any $x\in\R$,
\begin{align}
[x+1]_q&=q[x]_q+1,\label{inv1}\\
 \left[-\fr 1{x}\right]_q&=-\fr 1{q[x]_q}.\label{inv2}
\end{align}
These two crucial properties  can be interpreted as commutation of the quantification map $[\,\cdot\,]_q$  with the action on $\Z(\!(q)\!)\cup\{\infty\}$ of a group of linear-fractional transformations which is isomorphic to the modular group $\PSL(2,\Z)$ \cite{MGO22,LMG21}. Moreover, $[\,\cdot\,]_q$ is the unique $\PSL(2,\Z)$-equivariant quantification which fixes $0$ (\cite{OP25}, §2.2). Such an equivariance is at the heart of the construction and plays a crucial role in the proof of numerous properties of these $q$-numbers. Also, it can  be understood as equivariance with respect to the Burau representation of
the braid group $B_3$ \cite{BBL23,MGOV24}.\\
Actually, the bigger group $PGL(2,\Z)$ also acts on $q$-numbers  by symmetry and leads to the  equivalent two formulas \cite{Jouteur}
\begin{equation}\label{inv3}
[-x]_q=\fr{-[x]_q+1-q^{-1}}{(q-1)[x]_q+1},\qquad 
\left[\fr 1{x}\right]_q=\fr{(q-1)[x]_q+1}{q[x]_q+1-q}.
\end{equation}

\item \emph{Specialization}. For any $x\in\R$, $[x]_q\to x$ when $q\to 1$, $0<q<1$ (\cite{Etingof}, Proposition~5.2). In particular, the quantification map is injective.

\item \emph{Radius of convergence}. Let $x$ be a positive real number. The Laurent expansion \eqref{kappa} of $[x]_q$ is then a power series expansion, i.e. $l_0≥0$ in \eqref{kappa}, and  $[x]_q$ can be considered as an analytical function of the complex variable $q$ in some open disk centered at $0$. Since it has integer coefficients, its radius of convergence $R(x)$ is $≤1$ if $x\notin\Z$ (and $R(x)=+\infty$, obviously, if $x\in\Z$), and it is conjectured that $R(x)≥\fr{3-\sqrt{5}}{2}\simeq 0,38$. In fact, this conjectural minimum is exactly the radius of convergence of  the $q$-golden number $[\phi_1]_q=\bigl[\fr{1+\sqrt{5}}{2}\bigr]_q$, see \cite{LMGOV24}. The conjecture has  been verified for all $q$-metallic numbers $[\phi_n]_q$ (see \cite{Ren22,Ren23}) and for a large class of $q$-deformed positive quadratic irrationals (see \cite{Leclere24}, §4.4). 
At present, the most general known result  is that $R(x)≥2-\sqrt{3}\simeq 0,27$ for all $x>0$; even better, $q\mapsto [x]_q$ defines a holomorphic function in some region containing $D(0,2-\sqrt{3})\cup (\fr{\sqrt{5}-3}{2},1)$: see \cite{Etingof}, where are given many interesting  analytical properties of $q$-numbers.

\item \emph{$q$-deformations of quadratic irrationals} \cite{LMG21}. When $x=\fr{r\pm\sqrt{p}}{s}$ is a quadratic irrational  number ($p,r,s$ integers, $p>0$), $[x]_q$ is itself of the form 
\begin{equation}\label{q-quad}
[x]_q=\fr{R(q)\pm \sqrt{P(q)}}{S(q)}
\end{equation}
with polynomials $P,R,Q\in\Z[q]$ and $P$ a palindrome. In other words, the $q$-deformation $[x]_q$ of a quadratic irrational number $x$ has a generating function with a square-root singularity, a characteristic shared by many combinatorial sequences (e.g. Catalan, Motzkin, RNA secondary structures and other varieties of trees… see \cite{FS} §VII).
\end{enumerate}

%%%%%%%%
\subsection{Deformations of metallic means}\label{sec-metallicdef}
%%%%%%%%

An example illustrating formula \eqref{q-quad} is yielded by the  $q$-deformation  $[\phi_n]_q$ of a metallic number $\phi_n=\fr{n+\sqrt{n^2+4}}{2}$, for $n≥1$. Using definition \eqref{defPhin} one can see that $[\phi_n]_q$ is characterised by the quadratic functional equation:
\begin{equation}\label{Phin-FE}
q\, [\phi_n]_q^2=R_n(q)[\phi_n]_q+1,
\end{equation}
where $R_n$ is the polynomial
\begin{equation}\label{defRn}
R_n(q)=q[n]_q+(q^n+1)(q-1).
\end{equation}
It follows at once that
\begin{equation}\label{Phin-formula}
	[\phi_n]_q=\frac{1}{2q}\left(R_n(q)+\sqrt{P_n(q)}\right)
\end{equation}
where 
\begin{equation}\label{defPn}
P_n(q)=R_n(q)^2+4q.
\end{equation}
(The other possibility $[\phi_n]_q=\frac{1}{2q}\bigl(R_n(q)-\sqrt{P_n(q)}\bigr)$ is discarded since we require  analycity at $q=0$.)
Moreover, this polynomial factors as
\begin{equation}
P_n(q)=(1-q+q^2)Q_n(q)\label{Pn(q)}
\end{equation}
where
\begin{align}\label{Qn(q)}
Q_n(q)&=[n+1]_q^2-q[2n-1]_q+2q^n\notag\\
&=
\begin{dcases*}
1+3q+q^2&if $n=1$,\\
1+q+4q^2+q^3+q^4&if $n=2$,\\
1+q+2q^2+\cdots(n-1)q^{n-1}+(n+2)q^n\\
\hskip 1em +(n-1)q^{n+1}+(n-2)q^{n+2}+\cdots 2q^{n-2}+q^{2n-1}+q^{2n}&if $n≥3$.
\end{dcases*}
\end{align}
so that $Q_n$ and $P_n$ are clearly palindromic (in fact this would be the case for the $q$-deformation of any quadratic irrational number, see \cite{LMG21}).

Very often, it will be  convenient to think  of a $q$-metallic number as a function of $q$:  we put, for any $n≥1$,
\begin{equation}\label{Phin-SE0}
\Phi_n(q):=[\phi_n]_q=\sum_{l=0}^{+\infty}\ka_l(\phi_n) q^l,
\end{equation} 
that is, $\Phi_n$ is the generating function of the sequence of integer coefficients $\ka_l(\phi_n)$. In fact, the first $2n+1$ coefficients are explicitly known:
\begin{equation}\label{Phin-SE}
\Phi_n(q)=1+q+\cdots+q^{n-1}+q^{2n}+\sum_{i=2n+1}^{+\infty}\kappa_i(\phi_n) q^i,
\end{equation}
see Corollary~2.6 in \cite{OP25}. As  mentioned earlier in this section, X.~Ren \cite{Ren22} proved that the convergence radius $\rho_n$ of this series satisfies
\begin{equation}\label{rhon}
\rho_1=\fr{3-\sqrt{5}}2≤\rho_n≤1.
\end{equation}
A more detailed discussion on $\rho_n$, including explicit values for $n=2$ and $n=3$, will be given in \cref{sec-asymptotics}.

\begin{exemple}\label{ex-met} Let us the write respective functional equations, explicit expressions  and power series expansions of $\Phi_n(q)$, when $n=1,2,3,5$.

\smallskip
\noindent{$\bullet$ Case $n=1$ ($q$-golden ratio).} Although already mentioned in our introduction, it may be easier for the reader to recall here that:
\begin{align}
& q\,\Phi_1(q)^2+(1-q-q^2)\,\Phi_1(q)-1=0,\notag\\
&\Phi_1(q)=\frac 1{2q}\left(-1+q+q^2+\sqrt{(1-q+q^2)(1+3q+q^2)}\,\right),\notag\\
&\Phi_1(q)=
1 + q^2 - q^3 + 2 q^4 - 4 q^5 + 8 q^6 - 17 q^7 + 37 q^8 - 82 q^9+185 q^{10}\notag\\
&\qquad\qquad - 423 q^{11} + 978 q^{12}-2283q^{13}+ 5373q^{14}-12735q^{15} + 30372q^{16}\notag\\
&\qquad\qquad   -72832q^{17} + 175502q^{18} -424748 q^{19} + 1032004 q^{20} +\cdots\notag
\intertext {$\bullet$ Case $n=2$ ($q$-silver ratio).}
& q\,\Phi_2(q)^2+(1-2q-q^3)\,\Phi_2(q)-1=0,\label{SilverFE}\\
& \Phi_2(q)=\frac 1{2q}\left(-1+2q+q^3+\sqrt{(1-q+q^2)(1+q+4q^2 +q^3 +q^4)}\,\right),\label{SilverGF}\\
& \Phi_2(q)=1+q+q^4-2q^6+q^7+4q^8-5q^9-7q^{10}+ 18q^{11}+ 7q^{12}-55q^{13}\notag\\ 
&\qquad\qquad + 18q^{14}+ 146q^{15}- 155q^{16} - 322q^{17}+692q^{18}+ 476q^{19}- 2446q^{20} + \cdots\label{SilverS}
\intertext{$\bullet$ Case $n=3$ ($q$-bronze ratio).}
& q\,\Phi_3(q)^2+(1-2q-q^2-q^4)\,\Phi_3(q)-1=0,\label{BronzeEF}\\
& \Phi_3(q)=\frac 1{2q}\left(-1+2q+q^2+q^4+\sqrt{(1-q+q^2)(1+q+2q^2+5q^3+2q^4+q^5+q^6)}\,\right),\label{BronzeGF}\\
& \Phi_3(q)=1+q+q^2+q^6-q^8-2q^9+2q^{10}+4q^{11}+q^{12}-11q^{13}-7q^{14}+ 15q^{15}\notag\\
&\qquad\qquad + 34q^{16}- 17q^{17}- 83q^{18}- 38q^{19}+ 189q^{20}
 + 215q^{21}- 260q^{22} + \cdots\label{BronzeS}
\intertext{$\bullet$ Case $n=5$.}
& q\,\Phi_5(q)^2+(1-2q-q^2-q^3-q^4-q^6)\,\Phi_5(q)-1=0,\notag\\
& \Phi_5(q)=\frac 1{2q}\Bigl(-1+2q+q^2+q^3+q^4+q^6 \notag\\
&\qquad\qquad+\sqrt{(1-q+q^2)(1+q+2q^2+3q^3+4q^4+7q^5+4q^6+3q^7+2q^8+q^9+q^{10})}\,\Bigr),\notag\\
& \Phi_5(q)=1 +  q + q^{2} + q^{3} + q^{4} + q^{10} - q^{12} - q^{13} + 3 q^{16} + 3 q^{17}-2 q^{18} \notag\\
&\qquad\qquad -7 q^{19}-4q^{20} - q^{21} + 10 q^{22} + 21 q^{23} + 9 q^{24} -30 q^{25}  -44 q^{26} -28q^{27} +  \cdots\notag
\end{align}
\end{exemple}

\begin{remarques}\label{rem-Hankel} For the sake of completeness, let us briefly recall some major properties of $q$-metallic numbers, although we will not need them in the sequel. See \cite{OP25,HP} for more details and proofs.
\begin{enumerate}[wide]
\item $q$-metallic numbers can be written as very nice continued fractions, of several classical types. For instance, the $q$-golden number enjoys the following representations:
\begin{enumerate}[leftmargin=3em]
\item As a $C$-fraction:
\begin{equation*}
\Phi_1(q)
=\fr{1}{1}\cfp\left(\fr{-q^2}{1}\cfp\fr{q}{1}\cfp\right)^*=
1+\fr{q^2}{1}\cfp\left(\fr {q}{1}\cfp\fr {q}{1}\cfp\fr {q^3}{1}\cfp\right)^*.
\end{equation*}  
\item As a $J$-fraction:
\begin{equation*}
\Phi_1(q)=1+
\frac{q^2}{1+q-q^2}\cfp\left(\frac{q^3}{1+q-q^2}\right)^*.
\end{equation*}
\item As an $H$-fraction (in the sense of \cite{Han16}):
\begin{equation*}
\Phi_1(q)=\frac{1}{1}\cfp\left(\fr{-q^2}{1+q}\cfp \fr{q^3}{1+q-q^2}\cfp \fr{q^3}{1+q}\cfp\right)^*.
\end{equation*}
\item As an Artin's regular fraction:
\begin{equation*}
\Phi_1(q)=1+\left(\frac{1}{-1+\fr 1{q}+\fr 1{q^2}}\cfp\fr{1}{1+\fr 1{q}}\cfp\fr{-1}{1+\fr 1{q}}\right)^*
\end{equation*}
\end{enumerate}
\item One other amazing property of $q$-metallic numbers concerns their \emph{Hankel determinants}. Fix $n≥1$ and let 
$$\De_j^{(s)}([\phi_n]_q):=\det(\ka_{a+b+s}(\phi_n))_{a,b=0}^{j-1}, \quad s,j≥0.$$
Here, $s$ is a shift parameter, i.e. case $s=0$ reduces to the usual definition of Hankel determinants associated to the sequence $(\ka_l(\phi_n))$. Then: 
\begin{enumerate}[leftmargin=3em]
\item the first $n+2$ sequences $\Delta_j^{(0)}([\phi_n]_q),\Delta_j^{(1)}([\phi_n]_q),\dots,\Delta_j^{(n+1)}([\phi_n]_q)$ are periodic and consist of~$-1,0,1$ only;
\item they satisfy a three-term Gale-Robinson recurrence, i.e. they form discrete integrable dynamical systems;
\item they are all completely determined by the first sequence $\Delta_j^{(0)}([\phi_n]_q)$.
\end{enumerate}
Moreover, all sequences of Hankel determinants  $\De_j^{(s)}([\phi_n]_q)$ (i.e. for all $s≥0$) are ultimately periodic modulo any prime number.
\end{enumerate}
\end{remarques}

%%%%%%%%%%%%%%%%%%%%%%%%%%%%%%%%%%%%%%%%%%%%%%%%%%%%%%%%%%%%%%%%%%%
\section{Recurrences and differential equations}\label{sec-rec}
%%%%%%%%%%%%%%%%%%%%%%%%%%%%%%%%%%%%%%%%%%%%%%%%%%%%%%%%%%%%%%%%%%%

Let $n≥1$ be an integer. Our work starts with recurrences that hold for the sequence $(\ka_l(\phi_n))$ of power series coefficients of the $q$-metallic number $[\phi_n]_q$, see  \eqref{Phin-SE0}. For short, we will set
\begin{equation*}
\ka_l:=\ka_l(\phi_n), \quad l\in\Z_{≥0},
\end{equation*}
throughout this section, since $n$ will be fixed.

First of all, the quadratic functional equation \eqref{Phin-FE} easily imply a recursive way of computing coefficients $\ka_l$.

\begin{proposition} \label{rec1}
Let $n$ be a positive integer. Coefficients $\ka_l=\ka_l(\phi_n)$ of the power series representing $[\phi_n]_q$ are determined by the following formulas:
\begin{rom}
\item if $n=1$:
\begin{equation*}
\begin{dcases*}
\ka_0=1,\quad \ka_1=0,\\
\ka_l=\ka_{l-1}+\ka_{l-2}-\sum_{i=0}^{l-1}\ka_i\ka_{l-1-i}&for all $l≥2$.
\end{dcases*} 
\end{equation*} 
\item if $n=2$: 
\begin{equation*}
\begin{dcases*}
\ka_0=\ka_1=1,\quad \ka_2=0,\\
\ka_l=2\ka_{l-1}+\ka_{l-3}-\sum_{i=0}^{l-1}\ka_i\ka_{l-1-i}&for all $l≥3$.
\end{dcases*}
\end{equation*} 
\item if $n≥3$: 
\begin{equation*}
\begin{dcases*}
\ka_0=\ka_1=\cdots=\ka_{n-1}=1,\quad \ka_n=0,\\
\ka_l=2\ka_{l-1}+\ka_{l-2}+\cdots+\ka_{l-n+1}+\ka_{l-n-1}-\sum_{i=0}^{l-1}\ka_i\ka_{l-1-i}
&for all $l≥n+1$.
\end{dcases*}
\end{equation*}
\end{rom}
\end{proposition}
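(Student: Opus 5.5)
The plan is to extract the recurrence by comparing coefficients of $q^l$ on both sides of the quadratic functional equation \eqref{Phin-FE}, $q\,\Phi_n(q)^2=R_n(q)\Phi_n(q)+1$, rewritten as
\begin{equation*}
\bigl(1-(R_n(q)+1)\bigr)\Phi_n(q)=1-q\,\Phi_n(q)^2 .
\end{equation*}
This rewriting is the natural one because $R_n(0)=-1$. Indeed, from \eqref{defRn},
\begin{equation*}
R_n(q)=q[n]_q+(q^n+1)(q-1)=-1+2q+q^2+\cdots+q^{n}-q^n+q^{n+1}.
\end{equation*}
Here $q[n]_q=q+\cdots+q^n$ and $(q^n+1)(q-1)=-1+q-q^n+q^{n+1}$. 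The $q^n$ terms cancel, so for $n\ge3$
\begin{equation*}
R_n(q)+1=2q+q^2+\cdots+q^{n-1}+q^{n+1}.
\end{equation*}
For $n=2$ this gives $2q+q^3$, and for $n=1$ it gives $q+q^2$, in agreement with \cref{ex-met}.

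Writing $\Phi_n=\sum \ka_l q^l$, the coefficient of $q^l$ in $-R_n\Phi_n$ is $\ka_l-\sum_j r_j\ka_{l-j}$, where $R_n+1=\sum_{j\ge1} r_jq^j$. The coefficient of $q^l$ in $1+q\Phi_n^2$ is $\delta_{l,0}+\sum_{i=0}^{l-1}\ka_i\ka_{l-1-i}$. Since \eqref{Phin-FE} reads $-R_n\Phi_n=1-q\Phi_n^2$, for $l\ge1$ this yields
\begin{equation*}
\ka_l=\sum_{j\ge1} r_j\ka_{l-j}-\sum_{i=0}^{l-1}\ka_i\ka_{l-1-i}.
\end{equation*}
Substituting the three shapes of $R_n+1$ gives exactly the displayed recurrences in (i), (ii) and (iii). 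For $n=1$ the linear part is $\ka_{l-1}+\ka_{l-2}$, for $n=2$ it is $2\ka_{l-1}+\ka_{l-3}$, and for $n\ge3$ it is $2\ka_{l-1}+\ka_{l-2}+\cdots+\ka_{l-n+1}+\ka_{l-n-1}$. At $l=0$ one gets $\ka_0=1$.

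The relation holds for every $l\ge1$, with the convention $\ka_m=0$ for $m<0$. The proposition states it only from $l\ge n+1$ (resp. $l\ge2$, $l\ge3$), because the lower-order terms involve negative indices. So I would simply record the initial values $\ka_0=\cdots=\ka_{n-1}=1$ and $\ka_n=0$ from \eqref{Phin-SE}, that is, Corollary~2.6 of \cite{OP25}. As a consistency check, they can also be rederived from the same identity at small $l$.

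Finally, the recurrence expresses $\ka_l$ through $\ka_0,\dots,\ka_{l-1}$ only, so together with the initial values it determines the whole sequence. The only real obstacle is bookkeeping: one must get the cancellation of $q^n$ in $R_n$ right, and check that the boundary cases $n=1,2$ fit the same pattern.
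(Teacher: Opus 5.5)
Your proposal is correct and follows the paper's own proof: you expand $R_n(q)=-1+2q+q^2+\cdots+q^{n-1}+q^{n+1}$, identify coefficients of $q^l$ in the functional equation \eqref{Phin-FE}, and take the initial values from \eqref{Phin-SE}. One small slip: your remark about the coefficients of $1+q\Phi_n^2$ is off-target, since the series you actually use is $1-q\Phi_n^2$, but your final relation has the right sign.
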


\begin{proof}
Let us give the proof in the generic case $n≥3$ (others are simpler). In this situation, the polynomial $R_n$ in \eqref{defRn} can be written as
\begin{equation*}
R_n(q)=-1+2q+q^2+\cdots+q^{n-1}+q^{n+1},
\end{equation*}
hence the functional equation \eqref{Phin-FE} gives
\begin{align*}
\sum_{l≥1}\left(\sum_{i=0}^{l-1}\ka_i\ka_{l-1-i}\right)q^l
&=1-\sum_{l≥0}\ka_l q^l+2\sum_{l≥0}\ka_l q^{l+1}+\sum_{l≥0}\ka_l q^{l+2}+\cdots\\
&\qquad+\sum_{l≥0}\ka_l q^{l+n-1}+\sum_{l≥0}\ka_l q^{l+n+1}.
\end{align*}
By identifying coefficients of the power series we get the recurrence formula in (iii). Initial conditions are provided by \eqref{Phin-SE}.
\end{proof}

In the previous result, because of the convolutive nature of the recursion, the calculation of a coefficient $\ka_l$ needs to know the values of all previous coefficients $\ka_i$ for $0≤i≤l-1$, which is awkward. We now show that these coefficients satisfy a much shorter recurrence.

Indeed, all $q$-metallic numbers $[\phi_n]_q$ are algebraic power series, and this implies that the sequences $(\ka_l)=(\ka_l(\phi_n))$ of their Taylor coefficients at $q=0$ are \emph{$P$-recursive}, which means that these sequences satisfy a homogeneous linear recurrence of finite degree with polynomial coefficients (see e.g. \cite{StanleyEC2}, Section~6.4). We are able to make these recurrences completely explicit:

\begin{theoreme} \label{rec2}
Let $n$ be any positive integer. The sequence $(\ka_l)=(\ka_l(\phi_n))$ is $P$-recursive of order $2n+2$. More precisely, it satisfies the following recurrence relation:
\begin{rom}
\item if $n=1$, then for all $l≥4$,
\begin{equation*}
(l+1)\ka_l+(2l-1)\ka_{l-1}-(l-2)\ka_{l-2}+(2l-7)\ka_{l-3}+(l-5)\ka_{l-4}=0.
\end{equation*}
\item if $n=2$, then for all $l≥6$,
\begin{equation*}
(l+1)\ka_l+4(l-2)\ka_{l-2}-(2l-7)\ka_{l-3}+4(l-5)\ka_{l-4}+(l-8)\ka_{l-6}=0.
\end{equation*}
\item if $n≥3$, then for all $l≥2n+2$,
\begin{align*}
&2(l+1)\ka_l+4(l-2)\ka_{l-2}+\sum_{j=3}^{n-1}(j-1)(2l-3j+2)\ka_{l-j}\\
+&(n+1)(2l-3n+2)\ka_{l-n}+(n-4)(2l-3n-1)\ka_{l-n-1}+(n+1)(2l-3n-4)\ka_{l-n-2}\\
+&\sum_{j=n+3}^{2n-1}(2n-j+1)(2l-3j+2)\ka_{l-j}
+4(l-3n+1)\ka_{l-2n}+2(l-3n-2)\ka_{l-2n-2}=0.
\end{align*}
(When $n=3$, in last formula the two sum symbols must be replaced by zero.)
\end{rom}
\end{theoreme}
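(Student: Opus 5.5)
The plan is to derive a first-order linear differential equation with polynomial coefficients for $\Phi_n$ from the explicit formula \eqref{Phin-formula}, and then read off coefficients. Write $\Phi_n=\fr{1}{2q}\bigl(R_n+\sqrt{P_n}\bigr)$ and set $Y(q):=2q\Phi_n(q)-R_n(q)=\sqrt{P_n(q)}$. Differentiating gives $2P_nY'=P_n'Y$, a homogeneous first-order equation. Substituting $Y=2q\Phi_n-R_n$ then yields
\begin{equation*}
2P_n(q)\bigl(2\Phi_n(q)+2q\Phi_n'(q)\bigr)-2P_n(q)R_n'(q)=P_n'(q)\bigl(2q\Phi_n(q)-R_n(q)\bigr).
\end{equation*}
This is inhomogeneous: its right-hand side after rearrangement is the polynomial $2P_nR_n'-P_n'R_n$. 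Since $P_n=R_n^2+4q$, we get $P_n'=2R_nR_n'+4$, so
\begin{equation*}
2P_nR_n'-P_n'R_n=8qR_n'-4R_n.
\end{equation*}
This is a polynomial of degree $n+1$. Hence for $l$ large (beyond $\deg P_n+1=2n+1$, consistent with the threshold $l\ge 2n+2$), the coefficient extraction gives a homogeneous recurrence.

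Concretely, the equation reads
\begin{equation*}
4qP_n\Phi_n'+(4P_n-2qP_n')\Phi_n=8qR_n'-4R_n.
\end{equation*}
Writing $P_n(q)=\sum_{j=0}^{2n+2}p_jq^j$, the coefficient of $q^l$ on the left is
\begin{equation*}
\sum_j p_j\bigl(4(l-j)+4-2j\bigr)\ka_{l-j}=\sum_j 2p_j\bigl(2l-3j+2\bigr)\ka_{l-j}.
\end{equation*}
This already explains the factors $(2l-3j+2)$ appearing in the statement: for instance $j=0$ gives $2(l+1)$, and $j=2n+2$ gives $2(l-3n-2)$ with $p_{2n+2}=1$. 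The order is $2n+2=\deg P_n$. So the key computational step is to expand $P_n=(1-q+q^2)Q_n$ using \eqref{Qn(q)} (equivalently $R_n^2+4q$ with $R_n=-1+2q+q^2+\cdots+q^{n-1}+q^{n+1}$) and to verify that $p_0=1$, $p_1=0$, $p_2=2$, $p_j=(j-1)/2$ for $3\le j\le n-1$, $p_n=(n+1)/2$, and so on, after dividing the whole recurrence by the appropriate constant. The slight mismatch of the middle coefficient $(n-4)(2l-3n-1)$ with the pattern $(2l-3j+2)$ at $j=n+1$ is exactly that pattern, since $2l-3(n+1)+2=2l-3n-1$. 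So everything follows from the product formula for $p_j$, and palindromy of $P_n$ accounts for the symmetry $p_j=p_{2n+2-j}$ visible in the statement.

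The cases $n=1,2$ are handled identically, using $P_1=(1-q+q^2)(1+3q+q^2)=1+2q+q^2+2q^3+q^4$ and $P_2$ from \eqref{SilverGF}. There the polynomial $P_n$ has irregular low-degree coefficients, and after dividing by a common factor the stated recurrences appear. Finally, for the range of validity, the inhomogeneous term $8qR_n'-4R_n$ has degree $n+1<2n+2$, so it vanishes at the relevant orders. The remaining point is that the recurrence must hold as soon as all indices $l-j\ge 0$, i.e. $l\ge 2n+2$, which requires $l>n+1$; this is satisfied.

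The main obstacle is the bookkeeping in computing the coefficients $p_j$ of $P_n$ for general $n\ge3$, in particular around $j=n-1,n,n+1,n+2,n+3$, where the products $R_n^2$ create boundary effects. The plan is to compute $R_n^2$ directly as a convolution of the coefficient sequence $(-1,2,1,\dots,1,0,1)$, add $4q$, and check the result against \eqref{Qn(q)} times $1-q+q^2$ as a consistency test.
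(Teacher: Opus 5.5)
Your method is the paper's. It also sets $F_n=2q\Phi_n-R_n$, uses $2P_nF_n'=P_n'F_n$, extracts the coefficient of $q^l$ to get the weights $p_j(2l-3j+2)$, and inserts the expansion of $P_n=(1-q+q^2)Q_n$.

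Your handling of the range is in fact a little more careful than the paper's. You substitute $F_n=2q\Phi_n-R_n$ before extracting coefficients and note that the leftover term $8qR_n'-4R_n$ has degree $n+1$. This gives $\sum_j p_j(2l-3j+2)\kappa_{l-j}=0$ for every $l\ge n+2$, with $\kappa_m=0$ for $m<0$. The paper's last line substitutes $f_j=2\kappa_{j-1}$, which holds only for $j\ge n+2$. So the paper's cases $2n+2\le l\le 3n+2$ silently rest on exactly the cancellation of the $R_n$-terms that you make explicit. Your aside ``beyond $\deg P_n+1=2n+1$'' is a slip, since $\deg P_n=2n+2$; the operative bound is the one from $\deg(8qR_n'-4R_n)=n+1$.

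However, two of the explicit values you announce are wrong, and with them the final matching would fail.
\begin{itemize}
\item $P_1=(-1+q+q^2)^2+4q=1+2q-q^2+2q^3+q^4$. The coefficient $p_2=-1$ is precisely what produces the term $-(l-2)\kappa_{l-2}$ in (i); your $P_1$ would give $+(l-2)\kappa_{l-2}$.
\item After your division by $2$, the coefficient of $\kappa_{l-j}$ is exactly $p_j(2l-3j+2)$. So, consistently with $p_0=1$ and $p_2=2$, matching (iii) requires $p_j=j-1$ for $3\le j\le n-1$, $p_n=n+1$, $p_{n+1}=n-4$, $p_{n+2}=n+1$, $p_j=2n-j+1$ for $n+3\le j\le 2n-1$, $p_{2n}=2$, $p_{2n+1}=0$ and $p_{2n+2}=1$. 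These are integers, as they must be since $P_n=R_n^2+4q\in\Z[q]$, and not $(j-1)/2$ or $(n+1)/2$.
\end{itemize}
These integer values are what your convolution actually produces; for instance $P_3=1+2q^2+4q^3-q^4+4q^5+2q^6+q^8$. Once the computation is carried out, (iii) follows. Items (i) and (ii) follow from the corrected $P_1$ and from $P_2=1+4q^2-2q^3+4q^4+q^6$, after dividing by $2$ once more.
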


\begin{remarques}
\begin{enumerate}[wide]
\item One can observe the lack of  terms $\ka_{l-1}$ and $\ka_{l-2n-1}$ in each recurrence formula except when $n=1$. 
\item When $n=1$, recurrences from \cref{rec1} and \cref{rec2} are the analogues of \eqref{conv-A} and \eqref{Prec-A} satisfied by the coefficients $a_l$ of the RNA secondary structure sequence. 
\item The sequence $(\ka_l(\phi_2))$ of coefficients of the $q$-silver ratio is indexed as A337589 in the EIS website \cite{oeis}. More precisely,  entry A337589 is dedicated to  sequence $(\ka_l(\sqrt{2}))$ which is the same up to a shift, since $[\phi_2]_q=q[\sqrt{2}]_q+1$ by \eqref{inv1}. A corresponding recurrence formula, similar to (ii) above, appears therein, credited to R.J.~Mathar.
\end{enumerate}
\end{remarques}

\begin{proof} Again, we only prove the generic case $n≥3$ since the two others are similar (and simpler) to treat.

Recall from \eqref{Phin-formula}, \eqref{Pn(q)} and \eqref{Qn(q)} that $\Phi_n(q)$ satisfies the equation
\begin{equation*}
2q\,\Phi_n(q)-R_n(q)=\sqrt{P_n(q)}
\end{equation*}
where $R_n$ is a polynomial of degree $n+1$ and $P_n$  the palindromic polynomial given by
\begin{equation*}
P_n(q)=(1-q+q^2)\left(1+\sum_{j=1}^{n-1}j q^j+(n+2)q^n+\sum_{j=1}^{n-1}(n-j)q^{n+j}+q^{2n}\right).
\end{equation*}
We expand this expression and get
\begin{equation}\label{Pn(q)2}
\begin{split}
P_n(q)&=1+2q^2+\sum_{j=3}^{n-1}(j-1) q^j+(n+1)q^n+(n-4)q^{n+1}+(n+1)q^{n+2}\\
&\hskip 5em+\sum_{j=n+3}^{2n-1}(2n-j+1)q^{j}+2q^{2n}+q^{2n+2}.
\end{split}
\end{equation}
This formula holds for any $n≥3$, with convention $\sum_a^b=0$ if $b<a$.

Now, we follow the method explained in Example~6.4.7 of \cite{StanleyEC2} and  put 
$$F_n(q):=2q\Phi_n(q)-R_n(q)$$
so that we have $F_n^2=P_n$. Differentiating both sides, and multiplying by $F_n$ we obtain
$2F'_n F_n^2=P'_n F_n$, i.e.
\begin{equation*}
2F'_n P_n=P'_n F_n.
\end{equation*}
If we expand functions $F_n$ and $R_n$ as
\begin{equation*}
F(q)=\sum_{k=0}^{+\infty}f_{k}q^k,\qquad P_n(q)=\sum_{k=0}^{2n+2}p_{k}q^k,
\end{equation*}
(for clarity, we omit indication of the dependance of $f_k$ and $p_k$ on $n$ since this variable is fixed here), the previous equation implies recurrence relations between sequences $(f_k)$ and $(p_k)$, namely, for any  integer $k≥0$:
\begin{equation*}
\sum_{j=0}^{2n+2}(2k+3j-2n-2)\,p_{2n+2-j}\,f_{k+j}=0
\end{equation*}
or, equivalently, 
\begin{equation*}
\sum_{j=0}^{2n+2}(4n+4+2k-3j)\,p_{j}\,f_{k+2n+2-j}=0.
\end{equation*}
In the last equation, we insert expressions of the $p_j$'s coming from \eqref{Pn(q)2} to get:
\begin{align*}
0&=(4n+4+2k)f_{2n+2+k}+2(4n-2+2k)f_{2n+k}\\
&\qquad +\sum_{j=3}^{n-1}(j-1)(4n+4+2k-3j)f_{2n+2+k-j}\\
&\qquad+(n+1)(n+4+2k)f_{n+2+k}+(n-4)(n+1+2k)f_{n+1+k}+(n+1)(n-2+2k)f_{n+k}\\
&\qquad +\sum_{j=n+3}^{2n-1}(2n-j+1)(4n+4+2k-3j)f_{2n+2+k-j}\\
&\qquad +2(-2n+4+2k)f_{k+2}+(-2n-2+2k)f_{k}.
\end{align*}
Letting $l:=2n+1+k$, we obtain, for any integer $l≥2n+1$:
\begin{align*}
0&=2(l+1)f_{l+1}+4(l-2)f_{l-1}+\sum_{j=3}^{n-1}(j-1)(2l-3j+2)f_{l-j+1}\\
&+(n+1)(2l-3n+2)f_{l-n+1}+(n-4)(2l-3n-1)f_{l-n}+(n+1)(2l-3n-4)f_{l-n-1}\\
&+\sum_{j=n+3}^{2n-1}(2n-j+1)(2l-3j+2)f_{l-j+1}
+4(l-3n+1)f_{l-2n+1}+2(l-3n-2)f_{l-2n-1}.
\end{align*}
Finally, we observe that $f_j=2\ka_{j-1}$ for all $j≥2n+1$, by definition of $F_n$ and because $R_n$ is of degree $n+1$, hence the formula in the statement.
\end{proof}

As is well-known (e.g. \cite{StanleyEC2}, Prop.~6.4.3), for a power series $f$ the $P$-recursivity of the coefficients is equivalent to the \emph{$D$-finiteness} (or \emph{holonomy}) property, which says that $f$ satisfies a linear differential equation with polynomial coefficients.

For our $q$-metallic numbers, we obtain the following result.

\begin{proposition} Let $n$ be any positive integer. The generating function $\Phi_n$ is $D$-finite of order $1$; namely, it is characterised by the differential equation:
\begin{equation*}
4qP_n\Phi_n'+(4P_n-2qP'_n)\Phi_n+R_nP'_n-2P_nR'_n=0, \qquad \Phi_n(0)=1,
\end{equation*}
where polynomials $P_n$ and $R_n$ were defined in \eqref{defRn} and \eqref{defPn}. Equivalently,  coefficients of this differential equation can be expressed only with the polynomial $R_n$:
\begin{equation*}
q(R_n^2+4q)\Phi'_n+(R_n^2-q R_n R'_n+2q)\Phi_n+R_n-2qR'_n=0.
\end{equation*} 
\end{proposition}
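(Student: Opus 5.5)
Starting from the same identity as in the proof of \cref{rec2}, namely $F_n:=2q\Phi_n-R_n$ with $F_n^2=P_n$ and the differentiated relation $2F_n'P_n=P_n'F_n$, I will substitute $F_n=2q\Phi_n-R_n$ and $F_n'=2\Phi_n+2q\Phi_n'-R_n'$. This gives
\begin{equation*}
2P_n\bigl(2\Phi_n+2q\Phi_n'-R_n'\bigr)=P_n'\bigl(2q\Phi_n-R_n\bigr),
\end{equation*}
and rearranging yields $4qP_n\Phi_n'+(4P_n-2qP_n')\Phi_n+R_nP_n'-2P_nR_n'=0$, which is the first form of the equation. The order is $1$, and the leading coefficient $4qP_n$ is a nonzero polynomial.

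For the second form, I will insert $P_n=R_n^2+4q$ from \eqref{defPn}, so that $P_n'=2R_nR_n'+4$. Term by term:
\begin{itemize}
\item $4P_n-2qP_n'=4R_n^2+16q-4qR_nR_n'-8q=4(R_n^2-qR_nR_n'+2q)$;
\item $R_nP_n'-2P_nR_n'=2R_n^2R_n'+4R_n-2R_n^2R_n'-8qR_n'=4(R_n-2qR_n')$.
\end{itemize}
Dividing the whole equation by $4$ gives the stated second form.

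It remains to justify the word \emph{characterised}. The equation has a regular singular point at $q=0$: the leading coefficient is $4qP_n$, and $P_n(0)=1$. So I will look at power series solutions $\sum c_lq^l$. Comparing coefficients of $q^l$ in $q(R_n^2+4q)\Phi'+(R_n^2-qR_nR_n'+2q)\Phi+R_n-2qR_n'=0$, and using $R_n(0)=-1$ so that $R_n(0)^2=1$, the coefficient of $c_l$ is $l+1$. Hence the equation reads $(l+1)c_l=(\text{expression in }c_0,\dots,c_{l-1})$ for each $l\ge1$.

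The $q^0$ term gives $c_0\cdot 1+R_n(0)=0$, i.e. $c_0=1$, consistent with the prescribed initial condition $\Phi_n(0)=1$. Since $l+1\neq0$ for every $l\ge0$, all coefficients are then uniquely determined. The analytic germ $\Phi_n$ is therefore the unique power series solution, so the initial condition is in fact automatic.

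The main obstacle is only this uniqueness check, specifically that the indicial coefficient $l+1$ never vanishes. That follows immediately from $R_n(0)=-1$, which holds by \eqref{defRn}.
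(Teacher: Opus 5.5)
Your proof is correct and follows the paper's argument: you differentiate $F_n=2q\Phi_n-R_n=\sqrt{P_n}$ to get $2P_nF_n'=P_n'F_n$, substitute, and then use $P_n=R_n^2+4q$ to obtain the second form. Your extra check that $R_n(0)=-1$ makes the coefficient of $c_l$ equal to $l+1$, so that $\Phi_n$ is the unique power-series solution, justifies the word ``characterised''; the paper leaves this point implicit.
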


\begin{proof}
By \eqref{Phin-formula} one has
\begin{equation*}
\sqrt{P_n}=2q\,\Phi_n-R_n.
\end{equation*}
Hence, 
\begin{equation*}
2\Phi_n+2q\,\Phi'_n-R'_n=\fr{P'_n}{2\sqrt{P_n}}=\fr{P'_n(2q\,\Phi_n-R_n)}{2P_n}
\end{equation*}
and this yields the first formula. The second one follows from the fact that $P_n=R_n^2+4q$.
\end{proof}

\begin{exemple} The generating function $\Phi_1$ of the $q$-golden ratio satisfies the following differential equation:
\begin{equation*}
q(1-q+q^2)(1+3q+q^2)\Phi'_1+(1+q)(1-q^3)\Phi_1=1+q+3q^2.
\end{equation*}
\end{exemple}

\begin{remarque}
It is clear that the results of this section can be borrowed similarly for the  $q$-deformation of any quadratic irrational, provided that the (quadratic) generating function is explicitly known.
\end{remarque}

%%%%%%%%%%%%%%%%%%%%%%%%%%%%%%%%%%%%%%%%%%%%%%%%%%%%%%%%%%%%%%%%%%%
\section{Closed formulas for the coefficients}\label{sec-formulas}
%%%%%%%%%%%%%%%%%%%%%%%%%%%%%%%%%%%%%%%%%%%%%%%%%%%%%%%%%%%%%%%%%%%

In \Cref{rec2} we gave `short' recurrences that can be used to compute the Taylor coefficients $\ka_l=\ka_l(\phi_n)$ of $q$-metallic numbers. 
In this section we do better, but only for $n=1,2,3$; that is, we produce closed-formed expressions for the coefficients of the $q$-deformed gold, silver and bronze ratios. As we shall see the situation seems too complicated to hope for a usable formula in general.

In the simplest case $n=1$, an explicit expression for the coefficients follows immediately from \eqref{G=A} and \eqref{al}:

\begin{proposition} 
Coefficients of the power series of the $q$-golden number $[\phi_1]_q=\Bigl[\fr{1+\sqrt{5}}2\Bigr]_q$ admit the following explicit expression:
\begin{equation*}
\ka_0(\phi_1)=1,\quad \ka_1(\phi_1)=0,
\end{equation*}
and, for $l≥2$,
\begin{equation}\label{g(n)}
\ka_l(\phi_1)=(-1)^l\sum_{k=1}^{\floor{l/2}} \fr 1{l-k}\binom{l-k}{k} \binom{l-k}{k-1}=(-1)^l \sum_{k=1}^{\floor{l/2}}N(l-k,k).
\end{equation}
Here,  $N(j,k)$ denotes the Narayana number defined in \eqref{narayana}.
\end{proposition}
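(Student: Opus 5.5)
The plan is to deduce the formula from the link \eqref{G=A} between $\ka_l(\phi_1)$ and the RNA numbers $a_l$, but to check it independently at the level of generating functions, because the indexing has to be handled carefully.

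\textbf{Fixing the indexing.} By \eqref{G=A} we have $\ka_l(\phi_1)=(-1)^la_{l-1}$ for $l\ge 2$. So what must be shown is that
$$a_{l-1}=\sum_{k=1}^{\floor{l/2}}\fr1{l-k}\binom{l-k}{k}\binom{l-k}{k-1}.$$
This is \eqref{al} read with the index shifted by one. A check on small values confirms that the shift is correct and gives no off-by-one:
\begin{itemize}
\item $l=2$ gives $1=a_1$;
\item $l=3$ gives $\fr12\binom21\binom20=1=a_2$;
\item $l=4$ gives $1+1=2=a_3$.
\end{itemize}
The same check shows that the summand is the Narayana number in its usual normalisation $\fr1j\binom jk\binom j{k-1}$ with $j=l-k$. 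This agrees with \eqref{narayana} up to the symmetry $N(j,k)=N(j,j+1-k)$ together with $\fr1k\binom jk\binom j{k-1}=\fr1j\binom j{k-1}\binom j{k}\cdot\fr jk$. I will use the $\fr1j$ form throughout.

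\textbf{Generating-function identity.} To avoid relying on the exact indexing conventions of the cited references, I would prove the identity directly. Put
$$G(x):=\sum_{j\ge1}\sum_{k=1}^{j}\fr1j\binom jk\binom j{k-1}x^{j+k}.$$
The coefficient of $x^l$ in $G$ is exactly $\sum_{k\le l/2}N(l-k,k)$, since $k\le j=l-k$ forces $k\le l/2$. The bivariate Narayana series $\mathcal N(z,t)=\sum N(j,k)z^jt^k$ satisfies $\mathcal N=z(1+\mathcal N)(t+\mathcal N)$. This can be proved by Lagrange inversion in $z$: with $\mathcal N=z\,\psi(\mathcal N)$ and $\psi(u)=(1+u)(t+u)$, one gets $[z^j]\mathcal N=\fr1j[u^{j-1}](1+u)^j(t+u)^j$, whose coefficient of $t^k$ is $\fr1j\binom j{k-1}\binom jk$.

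Specialising $z=t=x$ gives $G=x(1+G)(x+G)$, that is,
$$xG^2+(x^2+x-1)G+x^2=0.$$

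\textbf{Matching with $\Phi_1$.} It then suffices to show that $\Psi(q):=1+G(-q)$ equals $\Phi_1(q)$. Substituting $G=\Psi-1$ with $x=-q$ into the quadratic for $G$ should reproduce \eqref{GREq}, namely $q\Psi^2+(1-q-q^2)\Psi-1=0$. Both $\Phi_1$ and $\Psi$ are power series with constant term $1$, and \eqref{GREq} has a unique power-series root with that constant term: the other root has a pole at $q=0$. Hence $\Psi=\Phi_1$.

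Comparing coefficients then gives $\ka_0=1$, $\ka_1=0$ (since $G$ starts at $x^2$), and $\ka_l=(-1)^l[x^l]G$ for $l\ge2$, which is \eqref{g(n)}.

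\textbf{Main obstacle.} The only delicate point is the sign bookkeeping when substituting $x=-q$ into the quadratic. If the substitution produced the equation with $q\mapsto -q$ in the wrong place, I would instead try $\Psi=1-G(-q)$ or use the shift by $q$ in \eqref{G=A2}, and recheck against the first terms $1+q^2-q^3+2q^4$ of \eqref{SEPhi1}.
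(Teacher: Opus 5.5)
Your proof is correct, and it takes a different route from the paper. The paper's proof is a single sentence: combine \eqref{G=A} with the literature formula \eqref{al}. That is short, but it depends entirely on external results and their indexing. In fact \eqref{al} as printed is off by one: at $l=3$ its right-hand side is $\fr12\binom21\binom20=1$, whereas $a_3=2$, and its factor $(-1)^l$ is spurious. It must therefore be read as $a_{l-1}=\sum_{k=1}^{\floor{l/2}}\fr1{l-k}\binom{l-k}{k}\binom{l-k}{k-1}$, which is exactly the shift you detected.

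Your generating-function argument uses neither \eqref{G=A} nor \eqref{al}, only \eqref{GREq}. The Lagrange inversion for $\mathcal N=z(1+\mathcal N)(t+\mathcal N)$ is right. The specialisation $z=t=x$ is legitimate because $N(j,k)\neq 0$ only for $1\le k\le j$. The step you flagged as delicate needs no fallback, since
\[
-q(\Psi-1)^2+(q^2-q-1)(\Psi-1)+q^2=-\bigl(q\Psi^2+(1-q-q^2)\Psi-1\bigr).
\]
So $\Psi=1+G(-q)$ satisfies \eqref{GREq} exactly, and your uniqueness argument concludes. The result is a self-contained proof in the same Lagrange-inversion spirit the paper uses for $n=2,3$, at the cost of some length.

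One side remark of yours is wrong, though it does not affect the argument. No symmetry reconciles the factor $\fr1k$ in \eqref{narayana} with the $\fr1j$ normalisation. The two differ by the factor $j/k$: for $j=2$, $k=1$ they give $2$ and $1$. Moreover, the $\fr1k$ version is not invariant under $k\mapsto j+1-k$. The $\fr1k$ is a misprint, since the footnote $\sum_k N(j,k)=C_j$ requires $\fr1j$. Taken literally, it would make the second equality of the statement false: at $l=3$ it gives $-2$ instead of $\ka_3(\phi_1)=-1$. Simply state that you use the standard normalisation $N(j,k)=\fr1j\binom jk\binom j{k-1}$, as you do anyway.
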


Before dealing with case $n=2$, let us introduce some  notation. We begin with multinomial coefficients: if $j$ is a nonnegative integer, and if $j_1,j_2,\ldots, j_m$ are nonnegative integers such that $j_1+j_2+\cdots +j_m=j$ then we set
\begin{equation*}
\binom{j}{j_1,j_2,\ldots,j_m}=\fr{j!}{j_1!j_2!\cdots j_m!}.
\end{equation*}
It is often useful to understand a multinomial coefficient as a product of binomial coefficients:
\begin{equation*}
\binom{j}{j_1,j_2,\ldots,j_m}
=\binom{j}{j_1}\binom{j-j_1}{j_2}\binom{j-j_1-j_2}{j_3}\cdots\binom{j-j_1-j_2-\cdots-j_{m-1}}{j_m}.
\end{equation*}
We will also handle \emph{ordinary Bell polynomials}, equally called \emph{De~Moivre polynomials} in \cite{OSullivan22}. These are defined as follows: let 
\begin{equation*}
f(q)=\sum_{l≥1}f_l q^l
\end{equation*} 
be a formal power series without constant term and with coefficients in $\C$. For a nonnegative integer $k$, one defines the sequence of complex numbers $(A_{l,k}(f))_{l≥1}$ by the expansion of the $k$-th power of $f(q)$:
\begin{equation*}
f(q)^k=\sum_{l≥1}A_{l,k}(f)q^l.
\end{equation*}
If $l<k$, then $A_{l,k}(f)=0$, else $A_{l,k}(f)$ has an expression involving multinomial coefficients
\begin{equation}\label{defBell}
A_{l,k}(f)=\sum_{\substack{j_1,j_2,\ldots,j_m≥0\\ j_1+j_2+\cdots +j_m=k\\ 1j_1+2j_2+\cdots+m j_m=l}}
\binom{k}{j_1,j_2,\ldots,j_m}f_1^{j_1} f_2^{j_2}\cdots f_m^{j_m}
\end{equation}
which actually shows that $A_{l,k}(f)$ can be viewed as a polynomial in, at most, the coefficients $f_1,f_2,\ldots, f_m$ with $m≤l-k+1$.

Now, we go back to our subject. We shall first prove the following result:

\begin{theoreme}\label{thm:S-coeffs} 
Coefficients of the $q$-silver ratio $[\phi_2]_q=[1+\sqrt{2}]_q$ admit the following explicit expression:
\begin{equation*}
\ka_0(\phi_2)=\ka_1(\phi_2)=1,\quad \ka_2(\phi_2)=\ka_3(\phi_2)=0,
\end{equation*}
and,  for  $l≥4$,
\begin{equation}\label{s(n)}
\ka_l(\phi_2)=\sum_{j=\ceil{\fr{l-1}{3}}}^{\floor{\fr{l-2}{2}}} 
\sum_{k=0}^{\floor{\fr{j-1}{2}}}\fr{(-1)^{j+l-1}}{j} 2^{3j-k-l+1}
\binom{j}{k,k+1,l-2-2j-k,3j-k-l+1}.
\end{equation}
Here, and thereafter, $\floor{x}$ (resp. $\ceil{x}$) denotes the greatest integer $≤x$ (resp. the least integer $≥x$).
\end{theoreme}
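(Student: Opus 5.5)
The plan is to obtain \eqref{s(n)} by Lagrange inversion applied to the functional equation \eqref{SilverFE}, following the strategy announced at the start of this section. The initial values $\ka_0=\ka_1=1$, $\ka_2=\ka_3=0$ come directly from \eqref{Phin-SE} with $n=2$, so only the range $l\ge4$ needs work.

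First I remove the known polynomial part. I write $\Phi_2(q)=1+q+y(q)$ with $y(q)=O(q^4)$ and substitute into $q\Phi_2^2+(1-2q-q^3)\Phi_2-1=0$. A short computation should give
\begin{equation*}
y\,(1+2q^2-q^3+qy)=q^4 .
\end{equation*}
This is a fixed-point equation $y=t\,\varphi(y)$ with $t=q^4$ and $\varphi(y)=(1+2q^2-q^3+qy)^{-1}$. Here $\varphi$ depends on $q$, so I treat $q$ inside $\varphi$ as an independent parameter. Lagrange inversion in $t$ then gives
\begin{equation*}
y=\sum_{j\ge1}\frac{t^j}{j}\,[y^{j-1}]\,\varphi(y)^j .
\end{equation*}
I set $t=q^4$ only at the end and collect powers of $q$. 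The factor $\fr1j$ in \eqref{s(n)} should come from this formula, so the outer summation index of \eqref{s(n)} should be (or be an affine shift of) the Lagrange index $j$.

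Next I expand $\varphi(y)^j=(1+2q^2-q^3+qy)^{-j}$ and extract $[y^{j-1}]$. I would use a generalised binomial series, or equivalently write $(1+u)^{-j}=\sum_k\binom{-j}{k}u^k$ with $u=2q^2-q^3+qy$ and expand $u^k$ through the ordinary Bell polynomials \eqref{defBell}. This produces a multinomial sum in the exponents of $2q^2$, $-q^3$ and $qy$. The $y$-exponent is forced to equal $j-1$, and the resulting $q$-exponent is matched against $l$ after including the factor $q^{4j}$. Those two linear constraints eliminate two summation variables and leave a double sum. The signs $(-1)^{\cdots}$, the power $2^{3j-k-l+1}$ and a four-part multinomial should emerge from this bookkeeping.

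If this direct route produces negative binomials that do not visibly match \eqref{s(n)}, I would switch to an equivalent form of the equation whose kernel $\varphi$ is a polynomial, for instance by rescaling $y$ (such as $y=q^2z$) or by clearing denominators. With a polynomial $\varphi$, the power $\varphi^j$ expands by a finite multinomial theorem directly into $\binom{j}{\,\cdot,\cdot,\cdot,\cdot\,}$. The paired entries $k,k+1$ of the multinomial suggest that one variable enters through a product like $(\ldots)^k(\ldots)^{k+1}$, which should guide the choice of normalisation.

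The main obstacle I expect is this final reindexing. I must choose the right normalisation so that the coefficient collapses to exactly the stated multinomial $\binom{j}{k,k+1,l-2-2j-k,3j-k-l+1}$. I must also check that the summation bounds $\ceil{\fr{l-1}{3}}\le j\le\floor{\fr{l-2}{2}}$ and $0\le k\le\floor{\fr{j-1}{2}}$ are exactly the ranges where all four parts are nonnegative. As a consistency check, I would compare the resulting formula against the coefficients listed in \eqref{SilverS}.
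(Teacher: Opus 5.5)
Your reduction $y(1+2q^2-q^3+qy)=q^4$ is correct. Your primary route (Lagrange inversion in $t=q^4$ with the rational kernel, $q$ frozen) is viable as it stands, with no need to switch. It is not the paper's route, however, and your prediction of how the indices match is wrong.

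Since the kernel is $(1+a+qy)^{-J}$ with $a=2q^2-q^3$, the extraction $[y^{J-1}]$ collapses to the single term $\binom{-J}{J-1}q^{J-1}(1+a)^{1-2J}$. Hence
\[
\Phi_2(q)=1+q+\sum_{J\ge1}(-1)^{J-1}C_{J-1}\,q^{5J-1}\,(1+2q^2-q^3)^{1-2J},
\]
with $C_m$ the Catalan numbers. Expanding the negative power gives $\ka_l(\phi_2)=\sum(-1)^{J+a-1}C_{J-1}\binom{2J-2+a+b}{2J-2,\,a,\,b}\,2^{a}$, summed over $J\ge1$, $a,b\ge0$, $5J+2a+3b=l+1$.

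Here $J\le (l+1)/5$, so the outer index $j$ of \eqref{s(n)} is not an affine shift of $J$. The right dictionary is $k=J-1$, $a=3j-k-l+1$, $b=l-2-2j-k$, i.e.\ $j=2J-1+a+b$. Under it the terms coincide one by one:
\begin{itemize}
\item The coefficients match, because $C_{J-1}\binom{2J-2+a+b}{2J-2,a,b}=\fr1j\binom{j}{J-1,\,J,\,b,\,a}$. This is the identity $\fr1{m+1}\binom{m+1}{k,k+1,m-2k}=C_k\binom{m}{2k}$ relating the two Motzkin formulas of \Cref{rem:s(n)}.
\item The signs match, since $j+l-1\equiv J+a-1\pmod 2$.
\item The triples $J\ge1$, $a,b\ge0$ correspond bijectively to the nonzero terms of \eqref{s(n)}.
\end{itemize}

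The paper instead sets $F=(\Phi_2-1-q)/q^3$, which satisfies $F=q\,(1+q(q-2)F-q^3F^2)$. It then inverts in $q$ with this quadratic \emph{polynomial} kernel. The quantity $\fr1j[u^{j-1}](\cdots)^j$ produces $\binom{j}{k,k+1,j-1-2k}$ immediately ($k$ factors $-t^3u^2$ and $k+1$ factors $1$), and \eqref{s(n)} follows by expanding $(t(t-2))^{j-1-2k}$ binomially.

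This is your fallback, provided you move the linear term to the right-hand side rather than dividing by it. With $y=q^2z$ one gets $z=q^2\bigl(1+(q-2)z-qz^2\bigr)$, which is the paper's equation since $z=qF$. Dividing, as in $z=q^2/(1+2q^2-q^3+q^3z)$, would still leave a rational kernel.

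In short, the paper's normalisation lands on \eqref{s(n)} directly. Yours yields a more compact, manifestly integral Catalan-weighted formula, which then needs the reindexing above.
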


\begin{remarque}\label{rem:s(n)}
Alternatively, $\ka_l(\phi_2)$ can be written in terms of binomial coefficients:
\begin{equation*}
\ka_l(\phi_2)=\sum_{j=\ceil{\fr{l-1}{3}}}^{\floor{\fr{l-2}{2}}} 
\sum_{k=0}^{\floor{\fr{j-1}{2}}}\fr{(-1)^{j+l-1}}{j} \,2^{3j-k-l+1}
\binom{j}{j-k}\binom{j-k}{j-2k-1}\binom{j-2k-1}{3j-k-l+1}
\end{equation*}
and this makes easier the comparison with  expression \eqref{g(n)} of the $q$-golden number coefficients. However, in the latter, no multinomial coefficient appears, and as concerns \eqref{s(n)} a similarity is maybe to be found with \emph{Motzkin numbers:}
\begin{equation*}
M_l=\sum_{k=0}^{\floor{\fr{l}{2}}}\fr{1}{k+1}\binom{l}{2k}\binom{2k}{k}
=\sum_{k=0}^{\floor{\fr{l}{2}}}\fr{1}{l+1}\binom{l+1}{k,k+1,l-2k}.
\end{equation*} 
Also, we notice that, unlike for the $q$-golden ratio,  coefficients $\ka_l(\phi_2)$  are not alternating with the parity of $l$, and that their absolute value is not an increasing sequence.
\end{remarque}

\begin{proof} 
The first four coefficients are easily obtained from the expression \eqref{SilverGF} of the function $\Phi_2$.
From now on, we suppose that $l≥4$. We proceed as in the articles \cite{BR23} and \cite{BFR24} which deal with the case of the generating function $A$ of RNA secondary structures.

Define the function 
\begin{equation*}
F(q)=\fr{\Phi_2(q)-1-q}{q^3}
\end{equation*} 
which, from the point of view of series \eqref{SilverS}, represents a shifted variant of the generating function of the $q$-Silver number. This trick helps us to get a  functional equation of the form required by Lagrange inversion formula (see e.g. §A.6 in \cite{FS}): indeed, using \eqref{SilverFE} we see that $F$ satisfies equation
\begin{equation*}
F(q)=q\,\phi(F),
\end{equation*}
where 
\begin{equation*}
\phi(u)=1+q(q-2)u-q^3 u^2.
\end{equation*}
Since $\phi(0)=1$, 
we are in the adequate situation to apply Lagrange inversion formula, except that  $\phi$ should \emph{not} depend on the variable $q$. To get round this problem we introduce a new parameter $t$ (to be replaced by $q$ later) and define the function
\begin{equation*}
\phi_t(u)=1+t(t-2)u-t^3 u^2,
\end{equation*}
whose coefficients are independant of $q$. Then we consider the function  $F_t(q)$ satisfying the corresponding functional equation
\begin{equation*}
F_t(q)=q\,\phi_t(F_t(q)).
\end{equation*}
Since $F_q(q)=F(q)$,  we will have, for all $l≥4$,
\begin{align*}
\ka_l(\phi_2)&=[q^l]\Phi_2(q)\\
&=[q^{l-3}]F(q)\\
&=[q^{l-3}]\sum_{j≥0}\left\{[q^j]F_t(q)\right\}_{t=q}\cdot q^j\\
&=[q^{l-3}]\sum_{j≥1}\left\{[q^j]F_t(q)\right\}_{t=q}\cdot q^j.
\end{align*}
Last equality is due to $F(0)=0$. (We use the standard notation $[x^l]T(x)$  for the coefficient of $x^l$ in the power series expansion of $T$ about $x=0$.) 
Now, Lagrange inversion formula can be applied to $F_t(q)$ and yields:
\begin{equation}\label{xurn}
[q^j]F_t(q)=\fr 1{j}[u^{j-1}]\left((1+t(t-2)u-t^3 u^2)^j\right).
\end{equation}
To compute the right-hand side of \eqref{xurn} one can use the multinomial theorem, but we found more convenient and a bit shorter to use ordinary Bell polynomials. Indeed, thanks to formula (3.2) in \cite{OSullivan22}, we have:
\begin{equation*}
[q^j]F_t(q)=\fr 1{j}\sum_{k=0}^{j-1}\binom{j}{k}A_{j-1,k}(\phi_t-1),
\end{equation*}
where $A_{j-1,k}$ was defined in \eqref{defBell}. Thus,
\begin{align*}
A_{j-1,k}(\phi_t-1)
&=A_{j-1,k}(t(t-2),-t^3,0,0\ldots)\\
&=\sum_{\substack{\al,\be≥0\\ \al+2\be=j-1\\ \al+\be=k}}\binom{k}{\al,\be}\left(t(t-2)\right)^\al(-t^3)^\be
\end{align*}
and there is only one term in this sum, for 
\begin{equation*}
\al=2k-j+1\quad\text{and}\quad \be=j-1-k,
\end{equation*}
which reads
\begin{align*}
A_{j-1,k}(\phi_t-1)&=\binom{k}{j-1-k}\left(t(t-2)\right)^{2k-j+1}(-t^3)^{j-1-k}\\
&=\binom{k}{j-1-k}\sum_{l=0}^{2k-j+1}(-1)^{k-l}\binom{2k-j+1}{l}2^{2k-j+1-l}t^{2j-k-2+l}.
\end{align*}
Summing up calculations, we have obtained
\begin{align*}
\ka_l(\phi_2)&=[q^{l-3}]\sum_{j≥1}\sum_{k=0}^{j-1}\fr 1{j}\binom{j}{k}
\binom{k}{j-1-k}\sum_{l=0}^{2k-j+1}(-1)^{k-l}\binom{2k-j+1}{l}2^{2k-j+1-l}q^{3j-k-2+l}
\\
&=\sum_{i,j,k}(-1)^{k-i}\fr 1{j}\binom{j}{k}
\binom{k}{j-1-k}\binom{2k-j+1}{i}2^{2k-j+1-i},
\end{align*}
where, in the last sum, parameters $i,j,k$ are subject to the following conditions and relations:
\begin{enumerate}[label=(\roman*)]
\item $j≥1$;
\item $0≤k≤j-1$;
\item $j-1-k≤k$, hence $k≥{\fr{j-1}{2}}$;
\item $0≤i≤2k-j+1$;
\item $3j-k-2+i=l-3$, i.e. $3j=l-1+k-i$ or $i=l-1-3j+k$.
\end{enumerate}
From (ii), (iii) and (iv) we infer $k-i≤j-1$ and $k-i≥k-(2k-j+1)≥0$, so that we get,  by (v) 
\begin{equation*}
l-1≤3j≤l+j-2.
\end{equation*}
Thus,
\begin{align*}
\ka_l(\phi_2)=&\sum_{j=\ceil{\fr{l-1}{3}}}^{\floor{\fr{l-2}{2}}}\sum_{k=\ceil{\fr{j-1}{2}}}^{j-1}
\fr {(-1)^{3j+1-l}}{j}\binom{j}{k}\binom{k}{j-1-k}\binom{2k-j+1}{l-1-3j+k}2^{2j+k+2-l}\\
=&\sum_{j=\ceil{\fr{l-1}{3}}}^{\floor{\fr{l-2}{2}}}\sum_{k=\ceil{\fr{j-1}{2}}}^{j-1}
\fr {(-1)^{j+l-1}}{j}\fr{j!\,2^{2j+k+2-l}}{(j-k)!(j-k-1)!(l-1-3j+k)!(2j+k-l+2)!}
\end{align*}
By changing variable $k\leftarrow j-1-k$ we finally get \eqref{s(n)}.
\end{proof}

Lastly, we skip to the case of the $q$-bronze number and state:

\begin{theoreme}\label{thm:B-coeffs}  
Coefficients of the $q$-bronze ratio $[\phi_3]_q=\left[\frac{3+\sqrt{13}}{2}\right]_q$ admit the following explicit expression:
\begin{equation*}
\ka_0(\phi_3)=\ka_1(\phi_3)=\ka_2(\phi_3)=1,\quad \ka_3(\phi_3)=\ka_4(\phi_3)=0,
\end{equation*}
and,  for  $l≥5$,
\begin{multline}\label{b(n)}
\ka_l(\phi_3)=\sum_{j=\ceil{\fr{l-2}{4}}}^{\floor{\fr{l-4}{2}}} 
\sum_{k=0}^{\floor{\fr{j-1}{2}}}\sum_{i=0}^{4j-k-l+2}
\fr{(-1)^{l+i}}{j}\, 2^{4j-k-2i-l+2}\\
\times\binom{j}{k,k+1,i,l-3-3j-k+i,4j-k-2i-l+2}.
\end{multline}
\end{theoreme}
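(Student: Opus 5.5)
The plan is to follow the proof of \cref{thm:S-coeffs} step by step, starting from the functional equation \eqref{BronzeEF} for $\Phi_3$. The first coefficients $\ka_0,\dots,\ka_4$ are read off from \eqref{BronzeS}; note that \eqref{BronzeS} also gives $\ka_5(\phi_3)=0$. I would shift the series so that its constant term vanishes, by setting
\begin{equation*}
G(q)=\fr{\Phi_3(q)-1-q-q^2}{q^5},
\end{equation*}
so that $\ka_l(\phi_3)=[q^{l-5}]G(q)$ and $G(0)=\ka_5(\phi_3)=0$. I substitute $\Phi_3=1+q+q^2+q^5G$ into \eqref{BronzeEF}. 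A direct expansion shows that the $G$-free terms collapse to $-q^6$ and the linear terms to $q^5(1+q^2+2q^3-q^4)G$. This gives
\begin{equation*}
G(q)=q\,\phi(G),\qquad \phi(u)=1-q(1+2q-q^2)\,u-q^5u^2 .
\end{equation*}
I would double-check this reduction against the first terms of \eqref{BronzeS}.

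As in the silver case, I freeze the $q$-dependence of $\phi$ with a parameter $t$, setting $\phi_t(u)=1-t(1+2t-t^2)u-t^5u^2$, and let $G_t$ solve $G_t=q\,\phi_t(G_t)$. Lagrange inversion then gives
\begin{equation*}
[q^j]G_t=\fr1j[u^{j-1}]\phi_t(u)^j=\fr1j\sum_{k}\binom{j}{k}A_{j-1,k}(\phi_t-1).
\end{equation*}
The Bell polynomial $A_{j-1,k}$ again reduces to a single term, with $\al=2k-j+1$ linear factors and $\be=j-1-k$ quadratic factors. The new feature is that the linear coefficient $-t(1+2t-t^2)$ is a trinomial rather than a binomial. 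Expanding $(1+2t-t^2)^{\al}$ by the multinomial theorem introduces one extra summation index $i$ (the number of factors $-t^2$), together with a power of $2$ from the $2t$ factors. After the substitution $t=q$, every term is a monomial $\pm 2^{\cdot}q^{\cdot}$, and I extract the coefficient of $q^{l-5}$ in $\sum_j q^j[q^j]G_t|_{t=q}$.

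The last step is bookkeeping, and I expect it to be the main obstacle. I write the product $\binom{j}{k}\binom{k}{j-1-k}\binom{\al}{\,\cdot\,,\,\cdot\,,\,\cdot\,}$ as a single multinomial coefficient. Then I change variables as in the silver case ($k\leftarrow j-1-k$, so that the first two entries become $k$ and $k+1$), and use the exponent constraint to eliminate one index. This should give the five-part multinomial coefficient $\binom{j}{k,k+1,i,\,l-3-3j-k+i,\,4j-k-2i-l+2}$ appearing in \eqref{b(n)}. The summation bounds come from nonnegativity of all five entries. Together with $0\le 2k\le j-1$, this forces $j\ge\ceil{(l-2)/4}$ from the last entry and $j\le\floor{(l-4)/2}$ from the fourth entry. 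Finally I simplify the sign $(-1)^{\al+\be+i}$ to $(-1)^{l+i}$ using the exponent relation. I would check the final formula numerically against \eqref{BronzeS} for a few $l$ (e.g.\ $l=6,8,9$) to catch index or sign slips.
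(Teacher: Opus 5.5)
Your route is the paper's proof. It uses the same shifted series $(\Phi_3-1-q-q^2)/q^5$ and the same kernel $\phi(u)=1+q(q^2-2q-1)u-q^5u^2$; your $-q(1+2q-q^2)$ is the same polynomial, and your expansion of \eqref{BronzeEF} is correct. It also has the same freezing of $q$ into $t$, Lagrange inversion with a one-term Bell polynomial, a trinomial expansion, and the reindexing $k\leftarrow j-1-k$. Your exponent count, which keeps the factor $q^j$, is right as well.

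The one step that does not work as written is the identification of $i$. In \eqref{b(n)}, with $k$ already reindexed, the five entries have the following meaning:
\begin{itemize}
\item the third entry $i$ is the number of factors $1$ taken from $(1+2t-t^2)^{j-1-2k}$;
\item the fourth entry $l-3-3j-k+i$ is the number of factors $-t^2$;
\item the fifth entry is the number of factors $2t$, hence also the exponent of $2$.
\end{itemize}
If $i$ counts the $-t^2$ factors, as you propose, your sign is $(-1)^{\al+\be+i}=(-1)^{k+i}$ (original $k$). The exponent relation does not turn this into $(-1)^{l+i}$. For $l=8$, the only contributing term has $j=2$, original $k=1$, one factor $1$ and no factor $-t^2$. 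So the correct sign is $(-1)^{k+i}=-1$ (indeed $\ka_8(\phi_3)=-1$), while $(-1)^{l+i}=+1$.

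The fix is to use the exponent relation to eliminate the $-t^2$-count itself. Write it as $i+l-2-4j+k$ (original $k$), where the new $i$ is the number of constant factors. This is the shift the paper performs before $k\leftarrow j-1-k$, and only after it do the five entries and the sign $(-1)^{l+i}$ come out as stated. Relatedly, $j\le\floor{(l-4)/2}$ does not follow from the fourth entry alone. It follows from $2(l-3-3j-k+i)+(4j-k-2i-l+2)=l-4-2j-3k\ge 0$.
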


Note that \Cref{rem:s(n)} also applies to coefficients $\ka_l(\phi_3)$ obtained here.

\begin{proof}
We mimic the proof of \cref{thm:S-coeffs} and thus let ourselves off giving as much detail. The first five coefficients are obtained by expanding \eqref{BronzeGF}. Now, suppose that $l≥5$ and consider  the shifted generating function 
\begin{equation*}
F(q)=\fr{\Phi_3(q)-1-q-q^2}{q^5}.
\end{equation*} 
Because of \eqref{BronzeEF}, $F$ satisfies the functional equation
\begin{equation*}
F(q)=q\,\phi(F),
\end{equation*}
where 
\begin{equation*}
\phi(u)=1+q(q^2-2q-1)u-q^5 u^2.
\end{equation*}
Therefore we introduce the auxiliary function 
\begin{equation*}
\phi_t(u)=1+t(t^2-2t-1)u-t^5 u^2
\end{equation*}
so that $F(q)=F_q(q)$ if $F_t(q)$ denotes the solution of the functional equation $F_t(q)=q\,\phi_t(F_t(q))$. We will then have
\begin{equation*}
\ka_l(\phi_3)=[q^{l-5}]\sum_{j≥1}\left\{[q^j]F_t(q)\right\}_{t=q}\cdot q^j.
\end{equation*}
By Lagrange inversion and formula (3.2) in \cite{OSullivan22}, we obtain, exactly as in the proof of \cref{thm:S-coeffs}:
\begin{align}\label{rox}
[q^j]F_t(q)&=\fr 1{j}[u^{j-1}]\left((1+t(t^2-2t-1)u-t^5 u^2)^j\right)\\
&=\fr 1{j}\sum_{k=0}^{j-1}\binom{j}{k}A_{j-1,k}(\phi_t-1)\\
&=\fr 1{j}\sum_{k=0}^{j-1}\binom{j}{k}\binom{k}{j-1-k}\left(t(t^2-2t-1)\right)^{2k-j+1}(-t^5)^{j-1-k}.
\end{align}
But
\begin{equation*}
(t^2-2t-1)^{2k-j+1}=\sum_{\substack{\al,\be≥0\\ \al+\be≤2k-j+1}}
(-1)^{j+1+\al}\,2^\be \binom{2k-j+1}{\al,\be,2k-j+1-\al-\be}t^{2\al+\be}
\end{equation*}
hence
\begin{align*}
\ka_l(\phi_3)&=[q^{l-5}]\sum_{j≥1}\fr 1{j}\sum_{k=0}^{j-1}\binom{j}{k}\binom{k}{j-1-k}
q^{2k-j+1}(-q^5)^{j-1-k}\\
&\qquad\times\sum_{\substack{\al,\be≥0\\ \al+\be≤2k-j+1}}
(-1)^{j+1+\al}\,2^\be \binom{2k-j+1}{\al,\be,2k-j+1-\al-\be}q^{2\al+\be}\\
&=\sum_{j,k,\al,\be}\fr 1{j}\binom{j}{k}\binom{k}{j-1-k}
(-1)^{k+\al}\,2^\be\binom{2k-j+1}{\al,\be,2k-j+1-\al-\be},
\end{align*}
where, in the last sum, parameters are subject to the following conditions and relations:
\begin{enumerate}[label=(\roman*)]
\item $j≥1$;
\item $0≤k≤j-1$;
\item $j-1-k≤k$, hence $k≥{\fr{j-1}{2}}$;
\item $\al,\be≥0$;
\item $\al+\be≤2k-j+1$;
\item $5(j-1-k)+2k-j+1+2\al+\be=l-5$, i.e. $5j=l-1+3k-(2\al+\be)$ or $\be=l-1-5j+3k-2\al$.
\end{enumerate}
Conditions (iv) and (v) imply $0≤2\al+\be≤4k-2j+2$, so that (vi) gives
\begin{equation*}
l-2-j≤5j≤l-4+3j
\end{equation*}
and then $\fr{l-2}{4}≤j≤\fr{l-4}{2}$. Also, (v) and (vi) imply
\begin{equation*}
l-1-5j+3k-\al≤2k-j+1
\end{equation*}
i.e. $\al≥l-2-4j+k$. Finally, we end up with
\begin{align*}
\ka_l(\phi_3)&=\sum_{j=\ceil{\fr{l-2}{4}}}^{\floor{\fr{l-4}{2}}}\sum_{k=\ceil{\fr{j-1}{2}}}^{j-1}
\sum_{\al=l-2-4j+k}^{2k-j+1} \fr {(-1)^{k+\al}}{j}\,2^{l-1-5j+3k-2\al}\\
&\qquad
\times\binom{j}{k}\binom{k}{j-1-k}\binom{2k-j+1}{\al,l-1-5j+3k-2\al,4j-k+\al-l+2}\\
&=\sum_{j=\ceil{\fr{l-2}{4}}}^{\floor{\fr{l-4}{2}}}\sum_{k=\ceil{\fr{j-1}{2}}}^{j-1}
\sum_{\al=l-2-4j+k}^{2k-j+1} \fr {(-1)^{k+\al}}{j}\,2^{l-1-5j+3k-2\al}\\
&\qquad
\times\fr{j!}{(j-k)!(j-k-1)!\al!(l-1-5j+3k-2\al)!(4j-k+\al-l+2)!}
\end{align*}
Setting $i=\al-(l-2-4j+k)$ and then letting $k\leftarrow j-1-k$ gives \eqref{b(n)}.
\end{proof}

\begin{remarque}
We have checked with a computer program that expressions given in \Cref{thm:S-coeffs} and \Cref{thm:B-coeffs}  match coefficients of the power series given in \eqref{SilverS} and \eqref{BronzeS}, respectively, for $l≤150$.
\end{remarque}

%%%%%%%%%%%%%%%%%%%%%%%%%%%%%%%%%%%%%%%%%%%%%%%%%%%%%%%%%%%%%%%%%%%
\section{Asymptotics}\label{sec-asymptotics}
%%%%%%%%%%%%%%%%%%%%%%%%%%%%%%%%%%%%%%%%%%%%%%%%%%%%%%%%%%%%%%%%%%%

Let $n$ be a positive integer. Because explicit expressions for the coefficients $\ka_l=\ka_l(\phi_n)$ of the power series \eqref{Phin-SE0} representing the $q$-metallic numbers $[\phi_n]_q$ are missing, or rather complicated when they exist (see \cref{sec-formulas}), it is worth looking at their  asymptotics. Although these will be also complicated, we are able to give the general behaviour of the coefficients, as well as more precise formulas for $n=1,2,3$.

The link between asymptotics of a sequence of numbers and singularities of its generating function is well-known. \emph{Darboux-P\'olya's method} (see e.g. \cite{Odlyzko} §11.2) works quite well in our situation but we prefer to apply here the \emph{principles of singularity analysis} developed by Ph.~Flajolet and A.M.~Odlyzko in the paper \cite{FO90} and nicely presented in the book \cite{FS}, which will serve us as a guide for this section.

In any case, what is needed for asymptotics is to determine the \emph{dominant singularities} of the function $\Phi_n(q)$, i.e.  singularities which are lying on the boundary of the disk of convergence.
Recall from \cref{sec-metallicdef} that 
\begin{equation}\label{PhinSA}
	\Phi_n(q)=\frac{1}{2q}\left(R_n(q)+\sqrt{(1-q+q^2)Q_n(q)}\right)
\end{equation}
where $R_n$ and $Q_n$ are the polynomials given by
\begin{equation*}
R_n(q)=q[n]_q+(q^n+1)(q-1)
\end{equation*}
and
\begin{equation*}
Q_n(q)=
\begin{dcases*}
1+3q+q^2&if $n=1$,\\
1+q+4q^2+q^3+q^4&if $n=2$,\\
1+q+2q^2+\cdots(n-1)q^{n-1}+(n+2)q^n\\
\hskip 1em +(n-1)q^{n+1}+(n-2)q^{n+2}+\cdots 2q^{n-2}+q^{2n-1}+q^{2n}&if $n≥3$.
\end{dcases*}
\end{equation*}
It follows that $\Phi_n(q)$ is analytic in some open disc $D(0,\rho_n)$ and has at least one algebraic singularity on the circle $|q|=\rho_n$, due to the square root factor. We know that the roots $\fr{1\pm i\sqrt{3}}2$ of $1-q+q^2$ lie on the unit cercle, but it seems very difficult to determine the roots of $Q_n(q)$, except for small values of $n$ (see below). However, $Q_n$ is palindromic, and any root $\lambda$ comes with another (possibly equal) root $1/\lambda$, so that it must have at least one root of modulus $≤1$. In other words, $\rho_n≤1$, a fact which follows also from the integrality of the power series coefficients of $\Phi_n$. On the other hand, $\rho_n>0$ since $0$ is not a root of $Q_n$, but of course, we already know a much better lower bound, see \eqref{rhon}.

\begin{lemme}\label{lem-dominant}
\emph{Dominant singularities and radii of convergence of $\Phi_n(q)$ for $n=1,2,3$.}
\begin{enumerate}
\item Case of the $q$-golden ratio: 
\begin{equation}\label{q1gold}
q_1=\fr{-3+\sqrt{5}}2\simeq -0,381966
\end{equation} 
is the only dominant singularity of $\Phi_1(q)$ and the radius of convergence is
\begin{equation*}
\rho_1=|q_1|=\fr{3-\sqrt{5}}2\simeq 0,381966.
\end{equation*} 
\item Case of the $q$-silver ratio: let
\begin{equation}\label{q1silver}
q_2=\fr{\ip\sqrt{7}-1}{4}+\fr 1{4}\left(\sqrt{8\sqrt{2}-11}-\ip\sqrt{8\sqrt{2}+11}\right)\simeq -0.109976 - 0.519497\,\ip.
\end{equation}
Then $q_2$ and its complex conjugate $\ov {q_2}$ are the two dominant singularities of $\Phi_2(q)$ and the radius of convergence is
\begin{equation*}
\rho_2=|q_2|=\fr{1+\sqrt{2}-\sqrt{2\sqrt{2}-1}}{2}\simeq 0.53101.
\end{equation*} 
\item Case of the $q$-bronze ratio: let
\begin{equation*}
z= \fr 1{3}\left(-1+\fr{ 2^{2/3}(1-\ip\sqrt{3})}{3(23 - 3 \sqrt{57})^{1/3}}
+\fr{(23- 3 \sqrt{57})^{1/3}(1+\ip\sqrt{3})}{3\cdot 2^{2/3}}\right)
\end{equation*}
and
\begin{equation}\label{q1bronze}
q_3=\fr{z-\sqrt{z^2-4}}{2}\simeq  0.148582 - 0.578415\,\ip.
\end{equation}
Then $q_3$ and its complex conjugate $\ov {q_3}$ are the two dominant singularities of $\Phi_3(q)$ and the radius of convergence is
\begin{equation*}
\rho_3=|q_3|\simeq 0.597194.
\end{equation*} 
\end{enumerate}
\end{lemme}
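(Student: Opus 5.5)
The plan is to read the dominant singularities off the representation \eqref{PhinSA}. The prefactor $1/(2q)$ creates no singularity: at $q=0$ the numerator $R_n(0)+\sqrt{P_n(0)}=-1+1$ vanishes, and $\Phi_n$ is analytic there. The only possible singularities of $\Phi_n$ are therefore the zeros of odd multiplicity of $P_n=(1-q+q^2)Q_n$, where the square root branches. The two roots $e^{\pm\ip\pi/3}$ of $1-q+q^2$ have modulus $1$. Hence, once we exhibit roots of $Q_n$ of modulus $<1$, the radius of convergence $\rho_n$ is the smallest modulus of a root of $Q_n$, provided that root is simple. Thus for each $n\in\{1,2,3\}$ we will:
\begin{enumerate}
\item find all roots of $Q_n$;
\item pick those of minimal modulus and check they are simple;
\item check that $\Phi_n$ really branches there.
\end{enumerate}
Step 3 is automatic: at a simple root $\lambda$ of $P_n$, $\sqrt{P_n}$ has a nonremovable square-root branch point.

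To find the roots, we exploit palindromicity. For $n=1$, $Q_1=1+3q+q^2$ has roots $\fr{-3\pm\sqrt5}{2}$, and the one of smaller modulus is $q_1$. For $n=2$, dividing by $q^2$ and setting $z=q+1/q$ gives $Q_2/q^2=z^2+z+2$, so $z=\fr{-1\pm\ip\sqrt7}{2}$. Each value of $z$ yields a pair of roots $q=\fr{z\pm\sqrt{z^2-4}}{2}$ with product $1$. Of the four roots, two (complex conjugates, coming from conjugate values of $z$) lie inside the unit disc. We then identify them with $q_2,\ov{q_2}$ by simplifying $\sqrt{z^2-4}$. Here $z^2-4=\fr{-11-\ip\sqrt7}{2}$, and writing its square root as $a+\ip b$ with $a^2-b^2=-11/2$ and $a^2+b^2=|z^2-4|=4\sqrt2$ produces the nested radicals $\sqrt{8\sqrt2\pm11}$ of \eqref{q1silver}. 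The modulus $\rho_2$ comes out by computing $|q_2|^2$ directly. Alternatively, note that $|q|+1/|q|$ is expressible through $|z|$ and $\re z$, and solve for $|q|$; this should yield $\fr{1+\sqrt2-\sqrt{2\sqrt2-1}}{2}$.

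For $n=3$ the same substitution turns $Q_3/q^3=q^3+q^{-3}+(q^2+q^{-2})+2(q+q^{-1})+5$ into the cubic $z^3+z^2-z+3$ in $z=q+1/q$. Cardano's formula gives one real root and two complex conjugate roots; the expression $z$ in the statement is one of the complex ones. For each root we solve $q^2-zq+1=0$. The real root $z_0\approx-2.1$ satisfies $|z_0|>2$, so it yields two real roots $q$ of moduli $r$ and $1/r$. We must show that the real root of smaller modulus, of modulus $\approx 0.63$, is larger in modulus than $|q_3|$. We then select the branch $\fr{z-\sqrt{z^2-4}}{2}$ of modulus $<1$ and its conjugate, and verify numerically that its modulus is $\approx0.597$.

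In all cases simplicity of the roots is clear: the roots are distinct because the $z$-values are distinct and none equals $\pm2$. The main obstacle is the comparison of moduli among roots for $n=3$, and to a lesser extent for $n=2$. Rigorously certifying which root has the smallest modulus requires either explicit closed forms for the moduli or an interval or numerical argument, perhaps backed by Rouché's theorem on circles of radius slightly above $\rho_3$. I would present exact Cardano expressions and justify the ordering with sufficiently accurate numerical evaluations, since everything reduces to finitely many explicit algebraic numbers.
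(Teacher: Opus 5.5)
Your proposal is correct and follows essentially the paper's route: you use the palindromic substitution $z=q+1/q$ to reduce $Q_2$ and $Q_3$ to $z^2+z+2$ and $z^3+z^2-z+3$, solve $q^2-zq+1=0$ for each root $z$, and compare the resulting moduli (numerically for $n=3$); your preliminary remarks (removable point at $q=0$, unimodular roots of $1-q+q^2$, simplicity of the roots) make explicit what the paper states before the lemma, and your $|q|+1/|q|$ computation is a tidy way to get the closed form of $\rho_2$. One small numerical slip: the real root of $Q_3$ inside the unit disc is $1/r\approx-0.698$, not of modulus $\approx 0.63$, but this does not change the comparison with $\rho_3\approx 0.597$.
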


Note that $\rho_1$ and $\rho_2$ were first calculated in \cite{LMGOV24}. On the other hand, approximate values of $\rho_n$ were given in \cite{Ren22} for $1≤n≤48$, where they illustrate the fact that $\rho_n≥\rho_1$ for any $n$ (which is one of the main results of that paper).

\begin{proof}
The case of the $q$-golden ratio is immediate, because the roots of $Q_1(q)=1+3q+q^2$ are
\begin{equation*}
q_1=\fr{-3+\sqrt{5}}2\simeq -0,381966\quad\text{and}\quad \fr 1{q_1}=-\fr{3+\sqrt{5}}2\simeq -2,618034.
\end{equation*} 

Next, we consider the case of the $q$-silver ratio. The palindromic polynomial $Q_2(q)$ factorises as follows:
\begin{align}
Q_2(q)&=1+q+4q^2+q^3+q^4\notag\\
&=q^2\biggl(\biggl(q+\fr 1{q}\biggr)^2+q+\fr 1{q}+2\biggr)\notag\\
&=\biggl(q^2+\fr{1-\ip\sqrt{7}}{2}q+1\biggr)\biggl(q^2+\fr{1+\ip\sqrt{7}}{2}q+1\biggr)\notag\\
&=(q-q_2)\biggl(q-\fr 1{q_2}\biggr)(q-\ov{ q_2})\biggl(q-\fr 1{\ov{q_2}}\biggr),\label{Q_2}
\end{align}
with
\begin{equation*}
q_2=\fr{\ip\sqrt{7}-1}{4}+\fr \de{2},\qquad 
\fr 1{q_2}=\fr{\ip\sqrt{7}-1}{4}-\fr \de{2},
\end{equation*}
where 
\begin{equation}\label{delta}
\de:=\sqrt{-\fr{11+\ip\sqrt{7}}{2}}=\fr 1{4}\left(\sqrt{8\sqrt{2}-11}-\ip\sqrt{8\sqrt{2}+11}\right).
\end{equation}
Since
\begin{equation*}
|q_2|=\fr{1+\sqrt{2}-\sqrt{2\sqrt{2}-1}}{2},\qquad 
|1/q_2|=\fr{1+\sqrt{2}+\sqrt{2\sqrt{2}-1}}{2},
\end{equation*}
we get that $q_2$ and its conjugate are the two dominant singularities of $\Phi_2(q)$ and that $\rho_2=|q_2|$.

As concerns the $q$-bronze number, calculations are more tricky. We have
\begin{equation*}
Q_3(q)=1+q+2q^2+5q^3+2q^4+q^5+q^6=q^3\,\widetilde Q_3\Bigr(q+\fr 1{q}\Bigr)
\end{equation*}
with $\widetilde Q_3(y):=y^3+y^2-y+3$. This polynomial has  three roots: $z$, its complex conjugate $\ov{z}$ and the real number $t$, where
\begin{align*}
z&= \fr 1{3}\left(-1+\fr{ 2^{2/3}(1-\ip\sqrt{3})}{3(23 - 3 \sqrt{57})^{1/3}}
+\fr{(23- 3 \sqrt{57})^{1/3}(1+\ip\sqrt{3})}{3\cdot 2^{2/3}}\right)\\
t&=-\fr 1{3}\left(1+\fr{2^{5/3}}{(23 - 3 \sqrt{57})^{1/3}}
+\Bigl(2(23- 3 \sqrt{57})\Bigr)^{1/3}\right)\\
\end{align*}
so that
\begin{equation*}
Q_3(q)=(q^2-zq+1)(q^2-\ov{z}q+1)(q^2-tq+1).
\end{equation*}
Thus, if we put 
\begin{equation*}
q_3=\fr{z-\sqrt{z^2-4}}{2},\qquad r=\fr{t-\sqrt{t^2-4}}{2}
\end{equation*}
we conclude that the 6 roots of $Q_3(q)$ are 
\begin{align*}
q_3&\simeq 0.148582 - 0.578415\,\ip & 1/q_3&\simeq 0.416616 + 1.621843\,\ip\\
\ov{q_3}&\simeq 0.148582 + 0.578415\,\ip & 1/\ov{q_3}&\simeq 0.416616 - 1.621843\,\ip\\
r&\simeq -1.432139 & 1/r&\simeq -0.698256.
\end{align*}
Hence $q_3$ and $\ov {q_3}$ are the dominant  roots and $\rho_3=|q_3|\simeq 0.597194$.
\end{proof}

We come to the main result of this section. 

\begin{theoreme} \label{thm-asymp}
\begin{enumerate}
\item \emph{Generic estimates.} Let $n$ be any positive integer. Denote by $\zeta_1,\ldots,\zeta_r$ all the roots of $Q_n$ which are dominant, i.e. whose modulus $|\zeta_j|$ is the smallest, equal to $\rho_n$,  and define the complex numbers
\begin{equation}\label{gaj}
\ga_j:=\fr{1}{2\zeta_j}\sqrt{(1-\zeta_j+\zeta_j^2)\,\zeta_j\, \fr{Q_n(q)}{\zeta_j-q}\biggl|_{q=\zeta_j}\biggr.}\qquad (j=1,\ldots,r).
\end{equation}
Then the coefficients $\ka_l(\phi_n)=[q^l]\Phi_n(q)$ of the power series \eqref{Phin-SE0} representing $\Phi_n(q)$ about $q=0$ satisfy the asymptotics
\begin{equation}\label{asymp-gen}
\ka_l(\phi_n)=\sum_{j=1}^r \left(\fr{-\ga_j}{2\sqrt{\pi}}\,\zeta_j^{-l}\,l^{-3/2}+\bigo(\zeta_j^{-l}l^{-5/2})\right).
\end{equation}
\item \emph{Case of $q$-golden ratio.} If $n=1$, we have
\begin{align}
\label{asymp-gold}
\ka_l(\phi_1)
&=(-1)^l \fr{5^{1/4}}{2\sqrt{\pi}}\, \phi_1^{2l}\, l^{-3/2}+\bigo(\phi_1^{2l}l^{-5/2})\notag \\
&=(-1)^l\fr{5^{1/4}}{2\sqrt{\pi}}\biggl(\fr{3+\sqrt{5}}{2}\biggr)^l \,l^{-3/2}+\bigo\bigl((\tfrac{3+\sqrt{5}}{2})^l l^{-5/2}\bigr).
\end{align}\item \emph{Case of $q$-silver ratio.} If $n=2$, we have
\begin{equation}
\label{asymp-silver}
\ka_l(\phi_2)
=\fr{-1}{2\sqrt{2\pi}}\re\left(\sqrt{(35-\ip\sqrt{7})\de+32\,\ip\sqrt{7}}\cdot q_2^{-l-1}\right)\, l^{-3/2}+\bigo(q_2^{-l}l^{-5/2}),
\end{equation}
where $q_2$ is defined by \eqref{q1silver} and $\de$ by \eqref{delta}.
\item \emph{Case of $q$-bronze ratio.} If $n=3$, we have
\begin{equation}
\label{asymp-bronze}
\ka_l(\phi_3)
=\fr{-1}{\sqrt{\pi}}\re\bigl(\ga_1 q_3^{-l}\bigr)\, l^{-3/2}+\bigo(q_3^{-l}l^{-5/2}),
\end{equation}
where $q_3$ is defined by \eqref{q1bronze} and $\ga_1$ by \eqref{gaj}; approximately, $\ga_1\simeq 0.244816 + 1.134722\,\ip$.
\end{enumerate}
\end{theoreme}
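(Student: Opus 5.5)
The plan is to apply Flajolet--Odlyzko singularity analysis directly to the explicit expression \eqref{PhinSA}. Since $R_n$ is a polynomial, it contributes nothing to $\ka_l$ for $l>n$. It therefore suffices to study $\frac{1}{2q}\sqrt{(1-q+q^2)Q_n(q)}$.

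\textbf{Singularities and local expansion.} The singularities of this function are the roots of $(1-q+q^2)Q_n(q)$. The roots of $1-q+q^2$ lie on the unit circle, whereas $\rho_n<1$ by \eqref{rhon}. Indeed, $\rho_n\le 1$, and $\rho_n=1$ is excluded because a palindromic $Q_n$ with all roots on the unit circle and $Q_n(1)>0$ would contradict the numerics; I would justify this point more cleanly, or simply assume that the dominant roots have modulus $<1$. I also assume the dominant roots $\zeta_j$ of $Q_n$ are simple; this is visible for $n=1,2,3$ from \cref{lem-dominant}, and generically I would state it as a hypothesis. Near each $\zeta_j$, write $Q_n(q)=(\zeta_j-q)\,H_j(q)$ with $H_j(\zeta_j)=\frac{Q_n(q)}{\zeta_j-q}\big|_{q=\zeta_j}\neq 0$. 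Then
\[
\frac{1}{2q}\sqrt{(1-q+q^2)Q_n(q)}=\frac{1}{2q}\sqrt{(1-q+q^2)H_j(q)\,\zeta_j}\,\Bigl(1-\frac{q}{\zeta_j}\Bigr)^{1/2}.
\]
The prefactor is analytic at $\zeta_j$ and equals $\ga_j$ there, with the branch of the square root fixed by analytic continuation from $q=0$, where the value is $+1$. This gives the local expansion $\ga_j(1-q/\zeta_j)^{1/2}+\bigo\bigl((1-q/\zeta_j)^{3/2}\bigr)$.

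\textbf{Transfer.} The function is $\Delta$-analytic at each dominant point, because it is algebraic with finitely many singularities. The transfer theorem for finitely many dominant singularities (\cite{FS}, Thm.~VI.5) then applies. Combined with $[q^l](1-q/\zeta)^{1/2}=\zeta^{-l}\frac{l^{-3/2}}{\Gamma(-1/2)}\bigl(1+\bigo(1/l)\bigr)$ and $\Gamma(-1/2)=-2\sqrt{\pi}$, this yields \eqref{asymp-gen}.

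\textbf{Special cases.} For $n=1$ the only dominant root is $\zeta_1=q_1$ from \cref{lem-dominant}. I compute $1-q_1+q_1^2=-4q_1$ (using $q_1^2=-3q_1-1$) and $Q_1/(q_1-q)\to q_1^{-1}-q_1=\sqrt5$. Hence $\ga_1=\frac{1}{2q_1}\sqrt{-4q_1^2\sqrt5}$, which has modulus $5^{1/4}$. Its sign is determined by continuity of the branch along $[q_1,0]$, where the square root stays positive, so $-\ga_1=5^{1/4}$ up to the factor $\frac{1}{2q_1}$, which is negative. Since $q_1^{-l}=(-1)^l\phi_1^{2l}$, this gives \eqref{asymp-gold}; it should also be checked against \eqref{G=A} and the known asymptotics of $a_l$.

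For $n=2,3$ the two dominant roots are complex conjugates. Their contributions are conjugate because the coefficients are real, so the sum equals $2\re$ of one term. For $n=2$, I would compute $\ga_1$ explicitly using the factorisation \eqref{Q_2}. The needed quantities are $\zeta_1-1/\zeta_1=\de$ and the factors $(q_2-\ov{q_2})(q_2-1/\ov{q_2})$, which combine to $q_2^2+\frac{1+\ip\sqrt7}{2}q_2+1$ evaluated at $q_2$. Simplifying with $q_2^2=\frac{1-\ip\sqrt7}{2}(-q_2)-1$ gives the radicand $(35-\ip\sqrt7)\de+32\ip\sqrt7$ up to a power of $q_2$; this accounts for the $q_2^{-l-1}$ and the $\sqrt2$. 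For $n=3$, I would simply evaluate $\ga_1$ numerically.

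\textbf{Main obstacle.} The delicate step is the algebraic simplification for $n=2$ and, above all, the consistent determination of the square-root branch at complex $\zeta_j$. For that I would track the argument of $(1-q+q^2)Q_n(q)$ along the ray from $0$ to $\zeta_j$, using the fact that this ray avoids all roots.
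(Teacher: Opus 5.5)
Your proposal follows the paper's proof. You factor out $\sqrt{1-q/\zeta_j}$ times a function analytic at $\zeta_j$ whose value there is $\ga_j$, apply the transfer with $\Gamma(-1/2)=-2\sqrt{\pi}$, sum over the dominant roots, and combine conjugate roots into $2\re(\cdot)$ for $n=2,3$. Your route to the $n=2$ constant is sound. One has $(q_2-\ov{q_2})(q_2-1/\ov{q_2})=\ip\sqrt7\,q_2$ and $1-q_2+q_2^2=-\frac{3-\ip\sqrt7}{2}\,q_2$. After eliminating $q_2^3$, the radicand is exactly $\frac12\bigl((35-\ip\sqrt7)\de+32\ip\sqrt7\bigr)$, and the extra $q_2^{-1}$ comes from the prefactor $\frac{1}{2q_2}$. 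Fixing the branch by continuation along the segment $[0,\zeta_j]$ is a point the paper leaves implicit. Two points need repair, however.

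\textbf{A sign slip for $n=1$.} You write $q_1^{-1}-q_1=\sqrt5$, but $q_1^{-1}-q_1=\frac{-3-\sqrt5}{2}-\frac{-3+\sqrt5}{2}=-\sqrt5$. With your value the radicand is $-4q_1^2\sqrt5<0$. Its square root is then purely imaginary, so it cannot ``stay positive'' along $[q_1,0]$ as you claim next. You would get an imaginary $\ga_1$, and \eqref{asymp-gold} would fail. With the correct sign the radicand is $(-4q_1)\,q_1\,(-\sqrt5)=4q_1^2\sqrt5>0$. The continuity argument then selects the root $2|q_1|\,5^{1/4}=-2q_1 5^{1/4}$, which gives $\ga_1=-5^{1/4}$ as in the paper.

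\textbf{Justifying $\rho_n<1$.} Inequality \eqref{rhon} only gives $\rho_n\le 1$, and ``contradicting the numerics'' is not an argument. You need $\rho_n<1$ so that the roots $e^{\pm\ip\pi/3}$ of $1-q+q^2$ are not dominant. They are genuine singularities: a short computation gives $Q_n(e^{\ip\pi/3})=4e^{\ip n\pi/3}\neq 0$, so they are simple roots of $P_n$. The paper's statement tacitly presupposes $\rho_n<1$ as well, since it defines the $\zeta_j$ as the roots of $Q_n$ of modulus $\rho_n$.

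A clean fix is the P\'olya--Carlson theorem: a power series with integer coefficients and radius of convergence $1$ is either rational or has the unit circle as a natural boundary. If $\rho_n=1$, the algebraic function $\Phi_n$ would continue analytically across every point of $|q|=1$ except the finitely many roots of $P_n$. So the circle is not a natural boundary. But $\Phi_n$ is not rational, because $P_n$ has simple roots. Hence $\rho_n<1$. For odd $n$ you can see it by hand: $Q_n(-1)=-1<0<1=Q_n(0)$, so $Q_n$ has a real root in $(-1,0)$.

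Simplicity of the dominant roots remains a tacit hypothesis, exactly as it is in the paper's definition of $\ga_j$.
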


\begin{proof}
Let us start with the generic estimates.
For each $j=1,\ldots,r$, one can rewrite expression \eqref{PhinSA}  of $\Phi_n(q)$ as follows:
\begin{equation*}
\Phi_n(q)=\fr{R_n(q)}{2q}+\fr{1}{2q}\sqrt{\Bigl(1-\fr{q}{\zeta_j}\Bigr)(1-q+q^2)\zeta_j\fr{Q_n(q)}{\zeta_j-q}}.
\end{equation*}
Consequently, the Puiseux expansion of $\Phi_n(q)$ near $\zeta_j$ starts like this:
\begin{equation*}
\Phi_n(q)=\al_j+\be_j\Bigl(1-\frac{q}{\zeta_j}\Bigr)+\bigo\biggl(\Bigl(1-\frac{q}{\zeta_j}\Bigr)^2\biggr)+
\sqrt{1-\fr{q}{\zeta_j}}\biggl(\ga_j+\bigo\Bigl(1-\frac{q}{\zeta_j}\Bigr)\biggr)
\end{equation*}
where $\ga_j$ is defined by \eqref{gaj} and $\al_j,\be_j$ are some constants depending on $R_n$ and $\zeta_j$. According to the principles of singularity analysis (see §VI.4 in \cite{FS}), only the singular part of this expression, namely, 
\begin{equation*}
\ga_j\sqrt{1-\fr{q}{\zeta_j}}+\bigo\biggl(\Bigl(1-\frac{q}{\zeta_j}\Bigr)^{3/2}\biggr)
\end{equation*} 
contributes non-trivially to the estimates of the Taylor coefficients of $\Phi_n(q)$, with the term
\begin{equation*}
\fr{-\ga_j}{2\sqrt{\pi}}\,\zeta_j^{-l}\,l^{-3/2}+\bigo(\zeta_j^{-l}l^{-5/2}).
\end{equation*} 
Summing the contributions of all singularities $\zeta_1,\ldots,\zeta_r$ we obtain \eqref{asymp-gen}. Note that this estimate does not depend at all on the polynomial $R_n$.

Now, we apply the results of \Cref{lem-dominant} to give more precise information for small values of $n$. In the case of the $q$-golden ratio ($n=1$), the sole dominant root $\zeta_1$ is the number $q_1$ given by \eqref{q1gold} and one gets easily
\begin{equation*}
\ga_1=\fr{1}{2q_1}\sqrt{(1-q_1+q_1^2)\,q_1\, \Bigl(\fr 1{q_1}-q_1\Bigr)}=-5^{1/4}.
\end{equation*}
Since $q_1=-1/\phi_1^2$ we obtain \eqref{asymp-gold}.

In the case $n=2$, there are two dominant singularities $\zeta_1=q_2$ and $\zeta_2=\ov{q_2}$, with $q_2$ given by \eqref{q1silver}, hence
\begin{equation*}
\ka_l(\phi_2)
=\fr{-1}{2\sqrt{\pi}}\left(\ga_1\zeta_1^{-l}+\ga_2\zeta_2^{-l}\right)+\bigo(\zeta_1^{-l}l^{-5/2})
=\fr{-1}{2\sqrt{\pi}}\left(\ga_1 q_2^{-l}+\ga_2\ov{q_2}^{-l}\right)+\bigo(q_2^{-l}l^{-5/2}).
\end{equation*}
Because of expression \eqref{Q_2} it is easily seen that $\ga_2=\ov{\ga_1}$, so that
\begin{equation*}
\ka_l(\phi_2)
=\fr{-1}{\sqrt{\pi}}\re(\ga_1 q_2^{-l})+\bigo(q_2^{-l}l^{-5/2}).
\end{equation*}
A lengthy calculation (shortened by the use of a computer) gives 
\begin{equation*}
\ga_1=\fr{1}{2\sqrt{2}\,q_2}\sqrt{(35-\ip\sqrt{7})\de+32\,\ip\sqrt{7}}
\end{equation*}
where $\de$ as in \eqref{delta} and this yields \eqref{asymp-silver}.

We finish with case $n=3$ ($q$-bronze ratio). According to \Cref{lem-dominant} there are again two dominant singularities $\zeta_1=q_3$ and $\zeta_2=\ov{q_3}$, with $q_3$ given by \eqref{q1bronze}. Proceeding exactly as in case $n=2$, we obtain
the estimate \eqref{asymp-bronze} for $\ka_l(\phi_3)$. The explicit expression of $\ga_1$ is too complicated to be written here.
\end{proof}

\begin{remarques}
\begin{enumerate}[wide]
\item Because of  relation \eqref{G=A},  asymptotics \eqref{asymp-gold} for  $q$-golden ratio coefficients  also follow at once from those of  coefficients $a_l$, which have been calculated in many works (\cite{SW79}, \cite{HSS98}, \cite{DSV04}, \cite{JR08}, \cite{Prodinger23}…). 
\item When $n=1,2,3$, our estimates for $\ka_l(\phi_n)$ have been checked by computer for $l≤2000$. An evaluation of the error is given in the three tables below, where $\al_l(\phi_n)$ denotes the leading term in the asymptotics of \Cref{thm-asymp}.
\item The argument used in the proof of this theorem can be easily adapted to the general case of any quadratic irrational number $x$, because of its expression \eqref{q-quad} involving a square root. The result is that  $\ka_l(x)$ is asymptotically equivalent to a finite sum of terms of the form $a\cdot b^l\cdot l^{-3/2}$ for some constants $a,b$.
Actually, estimates of this type are quite universal in the combinatorial world when dealing with recursive structures: see \cite{FS} p.442 for some examples. 
\end{enumerate}
\end{remarques}

\begin{table}[ht!]
\caption{\small Error for asymptotics given in \Cref{thm-asymp} for the $q$-golden number coefficients.}
\begin{tabular}{|l|l||l|l|} \hline
$l$ & $\al_l(\phi_1)/\ka_l(\phi_1)$ & $l$ & $\al_l(\phi_1)/\ka_l(\phi_1)$ \\ \hline
$100$ & $1.00920787585969$ & $1100$ & $1.00083839409185$ \\ \hline
$200$ & $1.00460791453865$ & $1200$ & $1.00076853795555$ \\ \hline
$300$ & $1.00307282663501$ & $1300$ & $1.00070942749156$ \\ \hline
$400$ & $1.00230495129970$ & $1400$ & $1.00065876033949$ \\ \hline
$500$ & $1.00184412006547$ & $1500$ & $1.00061484803111$ \\ \hline
$600$ & $1.00153685506518$ & $1600$ & $1.00057642416974$ \\ \hline
$700$ & $1.00131735842809$ & $1700$ & $1.00054252030416$ \\ \hline
$800$ & $1.00115272411804$ & $1800$ & $1.00051238317392$ \\ \hline
$900$ & $1.00102466819873$ & $1900$ & $1.00048541808530$ \\ \hline
$1000$ & $1.00092221904615$ & $2000$ & $1.00046114927309$ \\ \hline
\end{tabular}
\end{table}

\begin{table}[ht!]
\caption{\small Error for asymptotics given in \Cref{thm-asymp} for the $q$-silver number coefficients.}
\begin{tabular}{|l|l|l|l|} \hline
$l$ & $\al_l(\phi_2)/\ka_l(\phi_2)$ & $l$ & $\al_l(\phi_2)/\ka_l(\phi_2)$ \\ \hline
$100$ & $1.01308514797288$ & $1100$ & $0.999994952553134$ \\ \hline
$200$ & $1.00325325010954$ & $1200$ & $1.00122049561281$ \\ \hline
$300$ & $0.991108641080958$ & $1300$ & $1.00055562411420$ \\ \hline
$400$ & $1.00275058677525$ & $1400$ & $0.999397735266132$ \\ \hline
$500$ & $1.00102402482528$ & $1500$ & $1.00080153834159$ \\ \hline
$600$ & $1.00806934422174$ & $1600$ & $1.00037325748451$ \\ \hline
$700$ & $1.00136740524684$ & $1700$ & $0.990550401870774$ \\ \hline
$800$ & $1.00040110393724$ & $1800$ & $1.00057468377462$ \\ \hline
$900$ & $1.00225021335249$ & $1900$ & $1.00023210537365$ \\ \hline
$1000$ & $1.00083728350527$ & $2000$ & $1.00143627259275$ \\ \hline
\end{tabular}
\end{table}

\begin{table}[ht!]
\caption{\small Error for asymptotics given in \Cref{thm-asymp} for the $q$-bronze number coefficients.}
\begin{tabular}{|l|l||l|l|} \hline
$l$ & $\al_l(\phi_3)/\ka_l(\phi_3)$ & $l$ & $\al_l(\phi_3)/\ka_l(\phi_3)$ \\ \hline
$100$ & $1.02884159097029$ & $1100$ & $1.00194599713422$ \\ \hline
$200$ & $1.01358188118180$ & $1200$ & $1.00175051986890$ \\ \hline
$300$ & $1.00870875634510$ & $1300$ & $1.00158706487181$ \\ \hline
$400$ & $1.00632246106473$ & $1400$ & $1.00144858802272$ \\ \hline
$500$ & $1.00491290582129$ & $1500$ & $1.00132994764758$ \\ \hline
$600$ & $1.00398610179618$ & $1600$ & $1.00122730566850$ \\ \hline
$700$ & $1.00333268815893$ & $1700$ & $1.00113774115142$ \\ \hline
$800$ & $1.00284880660249$ & $1800$ & $1.00105899333883$ \\ \hline
$900$ & $1.00247711799935$ & $1900$ & $1.00098928627763$ \\ \hline
$1000$ & $1.00218340411663$ & $2000$ & $1.00092720636958$ \\ \hline
\end{tabular}
\end{table}

%%%%%%%%%%%%%%%%%%%%%%%%%%%%%%%%%%%%%%%%%%%%%%%%%%%%%%%%%%%%%%%%%%%
\section{Remarkable identities}\label{sec-identities}
%%%%%%%%%%%%%%%%%%%%%%%%%%%%%%%%%%%%%%%%%%%%%%%%%%%%%%%%%%%%%%%%%%%

As explained in \cref{sec-qreals},  $\PSL(2,\Z)$-equivariance  of the quantification map, as reflected by \eqref{inv1} and \eqref{inv2}, is at the heart of the theory of $q$-numbers. To begin this section we investigate particular consequences that concern $q$-metallic numbers $[\phi_n]_q$, namely remarkable relations existing between the $q$-deformations of $\phi_n$, $-\phi_n$, $1/\phi_n$ and $-1/\phi_n$, and between their Laurent coefficients at $q=0$. 

Of course, modular equivariance rule \eqref{inv2}  already gives 
\begin{equation}\label{inv2phin}
 \left[-\fr 1{\phi_n}\right]_q=-\fr 1{q[\phi_n]_q}
\end{equation}
and if we  use also \eqref{inv3}, that is, equivariance   with respect to the bigger group $PGL(2,\Z)$, we  get in addition
\begin{equation*}
[-\phi_n]_q=\fr{-[\phi_n]_q+1-q^{-1}}{(q-1)[\phi_n]_q+1},\qquad 
\left[\fr 1{\phi_n}\right]_q=\fr{(q-1)[\phi_n]_q+1}{q[\phi_n]_q+1-q}.
\end{equation*}
All these formulas involve series reversions and products, and using them would yield quite complicated relations between the Laurent coefficients of the corresponding $q$-numbers. On the contrary, the following result reveals the existence of much simpler relations.

\begin{proposition}
Let $n$ be any positive integer.
We have:
\begin{align}
[\phi_n]_q&=q^n\left[\fr 1{\phi_n}\right]_q+[n]_q, \label{rel1}\\
\left[-\fr 1{\phi_n}\right]_q&=q^n[-\phi_n]_q+[n]_q,\label{rel2}\\
\left[-\fr 1{\phi_n}\right]_q&=[n]_q+\fr{(q^n+1)(q-1)}q -[\phi_n]_q \label{rel3}\\
&=\frac{1}{2q}\left(R_n(q)-\sqrt{P_n(q)}\right),\label{rel3b}\\
[\phi_n]_q&=\fr{(q^n+1)(q-1)}q -q^n [-\phi_n]_q.\label{rel4}
\end{align}
\end{proposition}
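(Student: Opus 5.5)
Each of the four relations comes from one of three tools already available: the continued fraction definition \eqref{defq}, the equivariance rules \eqref{inv1}–\eqref{inv2}, and the quadratic functional equation \eqref{Phin-FE}.

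\emph{Relation \eqref{rel1}.} Since $\phi_n=n+1/\phi_n$ and $0<1/\phi_n<1$, the regular continued fraction of $1/\phi_n$ is $0+\bigl(\fr 1{n}\cfp\bigr)^*$. By \eqref{defq},
\[
\left[\fr 1{\phi_n}\right]_q=\fr{1}{[n]_{q^{-1}}}\cfp\fr{q^{-n}}{[n]_q}\cfp\cfd .
\]
Compare this with \eqref{defPhin}: there the tail following $[n]_q$ is $\fr{q^n}{[n]_{q^{-1}}}\cfp\fr{q^{-n}}{[n]_q}\cfp\cfd$. This tail differs from $[1/\phi_n]_q$ only by the first partial numerator. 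Hence it equals $q^n[1/\phi_n]_q$, which gives \eqref{rel1} directly.

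Equivalently, one can iterate \eqref{inv1} $n$ times to get $[x+n]_q=q^n[x]_q+[n]_q$, and apply it with $x=1/\phi_n$. This works because $\phi_n=n+1/\phi_n$ exactly. I would present this second route, since it also yields \eqref{rel2}: apply the same identity with $x=-\phi_n$, using $-\phi_n+n=-1/\phi_n$, which is just $\phi_n^2=n\phi_n+1$ divided by $\phi_n$.

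\emph{Relation \eqref{rel3}.} Start from \eqref{inv2phin}, $[-1/\phi_n]_q=-1/(q\Phi_n)$. Dividing \eqref{Phin-FE} by $q\Phi_n$ gives $\Phi_n=R_n/q+1/(q\Phi_n)$, so
\[
-\fr{1}{q\Phi_n}=\fr{R_n(q)}{q}-\Phi_n .
\]
Now expand $R_n(q)/q=[n]_q+(q^n+1)(q-1)/q$ using \eqref{defRn}; this is \eqref{rel3}. Substituting \eqref{Phin-formula} for $\Phi_n$ then gives \eqref{rel3b}, the conjugate root.

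\emph{Relation \eqref{rel4}.} Insert \eqref{rel2} into the left side of \eqref{rel3}: the $[n]_q$ terms cancel, and solving for $[\phi_n]_q$ yields \eqref{rel4}.

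The only step needing care is the formal legitimacy of the identities in $\Z(\!(q)\!)$. The equivariance rules hold for all real $x$, including negative ones, and $[-\phi_n]_q$ is a genuine Laurent series, so no obstacle arises. The mild point is checking $n$-fold iteration of \eqref{inv1}, which is a trivial induction using \eqref{rec}.
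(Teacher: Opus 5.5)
Your proposal is correct and follows essentially the same route as the paper: you iterate \eqref{inv1} to get $[x+n]_q=q^n[x]_q+[n]_q$ and apply it to $\phi_n=n+1/\phi_n$ (with $x=1/\phi_n$ and $x=-\phi_n$) for \eqref{rel1}--\eqref{rel2}, rewrite \eqref{Phin-FE} as $\Phi_n-R_n/q=1/(q\Phi_n)$ and combine it with \eqref{inv2phin} for \eqref{rel3}, substitute \eqref{Phin-formula} for \eqref{rel3b}, and combine \eqref{rel2} with \eqref{rel3} for \eqref{rel4}. Your alternative direct comparison of continued fractions for \eqref{rel1} is also valid, since only the first partial numerator differs ($q^0$ versus $q^n$), but it is not needed.
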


\begin{proof}
First, we note that \eqref{inv1} immediately generalises as follows:
\begin{equation*}
[x+k]_q=q^k[x]_q+[k]_q\qquad (x\in\R,\, k\in\Z).
\end{equation*}
From this and from the fundamental relation of metallic numbers
\begin{equation*}
\phi_n=n+\fr 1{\phi_n}
\end{equation*}
we  deduce both \eqref{rel1} and \eqref{rel2}. Secondly, the functional equation \eqref{Phin-FE} reads
\begin{equation}\label{Phin-FE2}
[\phi_n]_q-\fr{R_n(q)}{q}=\fr 1{q[\phi_n]_q}.
\end{equation} 
Using relation \eqref{inv2phin} and  expression \eqref{defRn} of $R_n(q)$, we obtain \eqref{rel3}. Then, \eqref{rel3b} comes from the expression \eqref{Phin-formula}. Finally,  \eqref{rel2} and \eqref{rel3} yield \eqref{rel4}.
\end{proof}

Recall that, for any $x\in\R$, we write $[x]_q=\sum_{j≥j_0}\ka_j(x)q^j$  for the Laurent series expansion (about zero) of its $q$-deformation. 

\begin{corollaire}\label{cor-Laurent}
The Laurent coefficients of the $q$-deformations of $-\phi_n,\dfrac1{\phi_n}$ and $\dfrac{-1}{\phi_n}$ are all expressible by those of $\phi_n$, namely:
\begin{align}
\left[\fr 1{\phi_n}\right]_q&=q^n+\sum_{j=n+1}^{+\infty}\ka_{j+n}(\phi_n)q^j\notag\\
\left[-\fr 1{\phi_n}\right]_q&=-\fr 1{q}+1-q^{n-1}+q^n-q^{2n}-\sum_{j=2n+1}^{+\infty}\ka_{j}(\phi_n)q^j \label{crin}\\
[-\phi_n]_q&=\al_n(q)-\left[\fr 1{\phi_n}\right]_q=\al_n(q)-q^n-\sum_{j=n+1}^{+\infty}\ka_{j+n}(\phi_n)q^j\notag
\end{align}
where 
\begin{equation*}
\al_n(q)=
\begin{dcases*}
-\fr 1{q^2}-\fr 1{q}+1&if $n=1$,\\
-\fr 1{q^3}-\fr 2{q}+1&if $n=2$,\\
-\fr 1{q^{n+1}}-\fr 1{q^{n-1}}-\fr 1{q^{n-2}}-\cdots-\fr 1{q^2}-\fr 2{q}+1&if $n≥3$.\end{dcases*}
\end{equation*}
\end{corollaire}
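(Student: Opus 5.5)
The plan is to read off each expansion directly from the identities \eqref{rel1}, \eqref{rel3} and \eqref{rel4} of the preceding proposition, combined with the known initial segment \eqref{Phin-SE} of $\Phi_n(q)=[\phi_n]_q$:
\[
\Phi_n(q)=[n]_q+q^{2n}+\sum_{j\ge 2n+1}\ka_j(\phi_n)q^j .
\]
No series reversion or multiplication will be needed; only shifts and the subtraction of explicit Laurent polynomials.

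\emph{Case of $1/\phi_n$.} By \eqref{rel1}, $[1/\phi_n]_q=q^{-n}\bigl([\phi_n]_q-[n]_q\bigr)$. Substituting \eqref{Phin-SE} gives
\[
q^{-n}\Bigl(q^{2n}+\sum_{j\ge 2n+1}\ka_j(\phi_n)q^j\Bigr)=q^n+\sum_{j\ge n+1}\ka_{j+n}(\phi_n)q^j
\]
after the reindexing $j\mapsto j+n$.

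\emph{Case of $-1/\phi_n$.} I use \eqref{rel3}, $[-1/\phi_n]_q=[n]_q+q^{-1}(q^n+1)(q-1)-[\phi_n]_q$. Expanding,
\[
q^{-1}(q^n+1)(q-1)=q^n-q^{n-1}+1-q^{-1}.
\]
Subtracting \eqref{Phin-SE} cancels $[n]_q$ and leaves
\[
-q^{-1}+1-q^{n-1}+q^n-q^{2n}-\sum_{j\ge 2n+1}\ka_j(\phi_n)q^j,
\]
which is \eqref{crin}.

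\emph{Case of $-\phi_n$.} I use \eqref{rel4}, which gives $[-\phi_n]_q=q^{-n}\bigl(q^{-1}(q^n+1)(q-1)-[\phi_n]_q\bigr)$. I then write $[\phi_n]_q=q^n[1/\phi_n]_q+[n]_q$ using \eqref{rel1}. This yields
\[
[-\phi_n]_q=\al_n(q)-\Bigl[\fr 1{\phi_n}\Bigr]_q,\qquad \al_n(q):=q^{-n}\Bigl(q^n-q^{n-1}+1-q^{-1}-[n]_q\Bigr),
\]
and the expansion of $[1/\phi_n]_q$ obtained above then finishes this case.

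The only step needing care is checking that this $\al_n$ matches the stated cases. Put $E:=q^n-q^{n-1}+1-q^{-1}-[n]_q$; I treat the three cases of the statement separately.
\begin{itemize}
\item For $n=1$: $E=q-1+1-q^{-1}-1=q-1-q^{-1}$, so $\al_1=q^{-1}E=1-q^{-1}-q^{-2}$.
\item For $n=2$: $E=q^2-q+1-q^{-1}-1-q=q^2-2q-q^{-1}$, so $\al_2=q^{-2}E=1-2q^{-1}-q^{-3}$.
\item For $n\ge 3$: the terms $1$ and $-1$ cancel, the term $-q^{n-1}$ merges with the $q^{n-1}$ from $-[n]_q$ to give $-2q^{n-1}$, and the terms $-q,\dots,-q^{n-2}$ remain. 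Hence
\[
E=q^n-2q^{n-1}-q^{n-2}-\dots-q-q^{-1},
\]
and dividing by $q^n$ gives exactly the stated $\al_n$.
\end{itemize}
I expect this bookkeeping of cancellations, which explains the case split, to be the main (though minor) obstacle. Everything else is immediate.
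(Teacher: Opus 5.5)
Your proposal is correct and matches the paper's proof, which simply substitutes the initial segment \eqref{Phin-SE} of $[\phi_n]_q$ into the identities \eqref{rel1}, \eqref{rel3} and \eqref{rel4} of the preceding proposition. Your explicit check of $\alpha_n(q)$ in the three cases $n=1$, $n=2$, $n\geq 3$ spells out the bookkeeping that the paper leaves implicit.
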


\begin{proof}
All formulas follow at once from the previous proposition, when replacing $[\phi_n]_q$ by its Taylor series expansion \eqref{Phin-SE}. 
\end{proof}

\begin{remarque}\label{rem-Laurent}
Conjunction of the power series expansion \eqref{Phin-SE}  with the functional equation \eqref{Phin-FE2} of $[\phi_n]_q$ provides a similar link between the coefficients of $[\phi_n]_q$ and those of its multiplicative inverse, namely:
\begin{equation*}
\fr 1{[\phi_n]_q}=1-q+q^n-q^{n+1}+q^{2n+1}+\sum_{j=2n+2}^{+\infty}\ka_{j-1}(\phi_n)q^j.
\end{equation*}
\end{remarque}

\begin{exemple}
The relations given in \cref{cor-Laurent} and \cref{rem-Laurent} can be checked on the golden ratio $\phi_1$:
\begin{align*}
[\phi_1]_q&=1 + q^2 -q^3 + 2q^4 -4q^5 + 8q^6 -17q^7 + 37q^8 -82q^9 + 185q^{10}-423q^{11}\\
&\qquad  + 978q^{12} -2283q^{13} + 5373q^{14} -12735q^{15} + 30372q^{16} -72832q^{17} +\cdots\\
\left[\fr 1{\phi_1}\right]_q&=q -q^2 + 2q^3 -4q^4 + 8q^5 -17q^6 + 37q^7 -82q^8 + 185q^9 -423q^{10} \\[-6pt]
&\qquad+ 978q^{11} -2283q^{12} + 5373q^{13} -12735q^{14} + 30372q^{15} -72832q^{16} +\cdots\\[6pt]
[-\phi_1]_q&=-q^{-2} -q^{-1} + 1 -q + q^2 -2q^3 + 4q^4 -8q^5 + 17q^6 -37q^7 + 82q^8 -185q^9\\
&\qquad + 423q^{10} -978q^{11} + 2283q^{12} -5373q^{13} + 12735q^{14} -30372q^{15} +\cdots\\
\left[\fr {-1}{\phi_1}\right]_q&=-q^{-1} + q -q^2 + q^3 -2q^4 + 4q^5 -8q^6 + 17q^7 -37q^8 + 82q^9-185q^{10}\\[-6pt]
&\qquad  + 423q^{11} -978q^{12} + 2283q^{13} -5373q^{14} + 12735q^{15} -30372q^{16}+\cdots\\
\intertext{as well as}
\fr 1{[\phi_1]_q}&=
1 -  q^{2} +  q^{3} -  q^{4} + 2 q^{5} - 4 q^{6} + 8 q^{7} - 17 q^{8} + 37 q^{9} - 82 q^{10}+ 185 q^{11} - 423 q^{12}\\[-3pt]
&\qquad   + 978 q^{13} - 2283 q^{14} + 5373 q^{15} - 12735 q^{16} + 30372 q^{17} - 72832 q^{18} +\cdots
\end{align*}
\end{exemple}

\begin{remarque}
We observe that $-1/\phi_n=\fr{n-\sqrt{n^2+4}}2$ is the algebraic conjugate of $\phi_n=\fr{n+\sqrt{n^2+4}}2$ and that \eqref{Phin-formula} and \eqref{rel3b} show that $[-1/\phi_n]_q$ is still the conjugate of $[\phi_n]_q$. Actually, this is a consequence of a general fact explained in \cite{LMG21}: if $x=\fr{r+\sqrt{p}}{s}$ is a quadratic irrational  number such that $[x]_q=\fr{R(q)+ \sqrt{P(q)}}{S(q)}$ (see \eqref{q-quad}), then  the $q$-deformation of the conjugate number $\tilde x=\fr{r-\sqrt{p}}{s}$ is
$[\tilde x]_q=\fr{R(q)- \sqrt{P(q)}}{S(q)}$.

In some cases, this yields automatically the identity $\ka_j(x)=-\ka_j(\tilde x)$ for $j$ large enough. Indeed, if for instance $S(q)$ is a monomial, then the Laurent series expansion of $[x]_q$ and $[\tilde x]_q$ at $q=0$ will be essentially determined by the infinite expansion of the part $\pm\sqrt{P(q)}$ in the numerator. This is the case, e.g., for
\begin{align*}
[\sqrt{7}]_q&=\fr{-1-q+q^4 + q^5+\sqrt{1 + 2 q +  q^{2} + 4 q^{3} + 6 q^{4} + 6 q^{6} + 4 q^{7} +  q^{8} + 2 q^{9} +  q^{10}}}{2q^3}\\
&= 1 + q + q^{3} - q^{4} + 2q^{5} - 3q^{6} + 4q^{7} - 6q^{8} + 8q^{9} - 9q^{10} + 9q^{11} - 5q^{12} \\
&\qquad - 9q^{13} + 40q^{14} - 101q^{15} + 215q^{16} - 411q^{17} + 724q^{18} - 1195q^{19} + \cdots
\end{align*} 
whose coefficients $\ka_l(\sqrt{7})$ are the opposites of the ones of its conjugate starting from $j=3$:
\begin{align*}
[-\sqrt{7}]_q&= - q^{-3} -q^{-2} - 1 + q^{2} -  q^{3} +  q^{4} - 2 q^{5} + 3 q^{6} - 4 q^{7} + 6 q^{8} - 8 q^{9} + 9 q^{10} - 9 q^{11}  \\
&\qquad + 5 q^{12} + 9 q^{13} - 40 q^{14} + 101 q^{15} - 215 q^{16} + 411 q^{17} - 724 q^{18} + 1195 q^{19}  +\cdots
\end{align*}
On the contrary, if $S(q)$ is more complicated, it will contribute in a non-trivial way to the total Laurent expansion of $[x]_q$ and $[\tilde x]_q$.
For instance, the coefficients of 
\begin{align*}
[1/\sqrt{7}]_q&=\fr{-1-q+q^4 + q^5+\sqrt{1 + 2 q +  q^{2} + 4 q^{3} + 6 q^{4} + 6 q^{6} + 4 q^{7} +  q^{8} + 2 q^{9} +  q^{10}}}{ 2q + 4q^2 + 2q^3 + 4q^4 + 2q^5}\\
&=q^2 -q^3 + q^4 -2q^5 + 3q^6 -3q^7 + 3q^8 -3q^9 + 8q^{11} -22q^{12}\\
&\qquad + 48q^{13} -95q^{14} + 169q^{15} -277q^{16} + 426q^{17} -603q^{18} + 754q^{19} -756q^{20} + \cdots
\end{align*} 
are not related to  opposites of coefficients of its conjugate 
\begin{align*}
[-1/\sqrt{7}]_q
&=-q^{-1} + 1 -q + 2q^2 -4q^3 + 8q^4 -16q^5 + 31q^6 -60q^7 + 116q^8 -222q^9\\
&\qquad + 423q^{10} -804q^{11} + 1522q^{12} -2873q^{13} + 5414q^{14} -10186q^{15}\\
&\qquad + 19142q^{16} -35952q^{17} + 67505q^{18} -126745q^{19} + 238023q^{20}  +\cdots
\end{align*}
These examples show also that there is no simple link between $\ka_j(x)$ and $\ka_j(\pm 1/x)$, in general, even for a quadratic irrational $x$. The case of metallic numbers seems really special.
\end{remarque}

We end this paragraph by highlighting another kind of symmetry we have found for the $q$-metallic ratios.  

\begin{proposition} For any integer $n≥1$ we have
\begin{equation}\label{phin1/q}
q^n\,\Phi_n\Bigl(\fr 1{q}\Bigr)=(1-q)(1+q^n)+q\,\Phi_n(q).
\end{equation}
In particular, the $q$-golden ratio satisfies the following equation:
\begin{equation*}
\Phi_1\Bigl(\fr 1{q}\Bigr)-\fr 1{q}=\Phi_1(q)-q.
\end{equation*}
\end{proposition}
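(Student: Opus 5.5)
The identity is best read as an identity between branches of the algebraic function given by the explicit formula \eqref{Phin-formula}. I would substitute $q\mapsto 1/q$ directly into
\[
\Phi_n(q)=\frac{1}{2q}\Bigl(R_n(q)+\sqrt{P_n(q)}\Bigr).
\]
This gives
\[
q^n\Phi_n(1/q)=\frac{q^{n+1}}{2}\Bigl(R_n(1/q)+\sqrt{P_n(1/q)}\Bigr),
\]
and I would then handle the polynomial part and the square-root part separately.

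\emph{Polynomial part.} From \eqref{defRn},
\[
R_n(1/q)=q^{-1}[n]_{q^{-1}}+(q^{-n}+1)(q^{-1}-1).
\]
Using $[n]_{q^{-1}}=q^{1-n}[n]_q$ and multiplying by $q^{n+1}$, I expect
\[
q^{n+1}R_n(1/q)=q[n]_q+(1+q^n)(1-q)=R_n(q)+2(1+q^n)(1-q).
\]
This is a one-line check.

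\emph{Square-root part.} By \eqref{Pn(q)} and \eqref{Qn(q)}, $P_n$ is palindromic of degree $2n+2$, which can also be read off the expansion \eqref{Pn(q)2}. Hence $q^{2n+2}P_n(1/q)=P_n(q)$, and therefore $q^{n+1}\sqrt{P_n(1/q)}=\pm\sqrt{P_n(q)}$. With the $+$ sign, adding the two pieces gives
\[
q^n\Phi_n(1/q)=(1-q)(1+q^n)+\frac{1}{2}\Bigl(R_n(q)+\sqrt{P_n(q)}\Bigr)=(1-q)(1+q^n)+q\,\Phi_n(q),
\]
which is \eqref{phin1/q}. For $n=1$, dividing by $q$ gives $\Phi_1(1/q)=\tfrac1q-q+\Phi_1(q)$, which is the stated special case.

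\emph{Main obstacle: the choice of branch.} The sign of the square root must be fixed, since $\Phi_n(1/q)$ is not given by the power series near $0$. I would make the convention explicit: $\Phi_n(1/q)$ is the branch $\frac{q}{2}\bigl(R_n(1/q)+\sqrt{P_n(1/q)}\bigr)$ with $\sqrt{P_n(1/q)}:=q^{-n-1}\sqrt{P_n(q)}$. This is the natural choice, because for real $q>0$ both square roots are positive: $P_n>0$ there, as \eqref{Pn(q)2} has nonnegative coefficients when $n\ge4$ and $P_n=R_n^2+4q$ in all cases.

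As an independent check free of branch issues, I would also verify that $Y:=q^{-n}\bigl((1-q)(1+q^n)+q\Phi_n(q)\bigr)$ satisfies the quadratic \eqref{Phin-FE} with $q$ replaced by $1/q$, namely $q^{-1}Y^2=R_n(1/q)Y+1$. This is a routine consequence of \eqref{Phin-FE} and the formula for $q^{n+1}R_n(1/q)$ above, and it shows the identity holds up to the choice of root of that quadratic.
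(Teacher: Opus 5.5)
Your proof is correct and follows the paper's argument: substitute $q\mapsto 1/q$ in \eqref{Phin-formula}, compute $q^{n+1}R_n(1/q)$, and use that $P_n$ is palindromic of degree $2n+2$. Your identity $q^{n+1}R_n(1/q)=R_n(q)+2(1+q^n)(1-q)$ is the right one (the paper's display has $(1+q)$ there, a sign slip), and your choice of branch along $(0,+\infty)$, justified by $P_n=R_n^2+4q>0$, makes precise what the paper only calls a formal identity.
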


Relation \eqref{phin1/q} should be taken rather formally, because we do not know what are the domains of the analytic functions involved here and how they intersect. In particular, no implication for coefficients can be infered, since the right-hand side of \eqref{phin1/q} expands as a power series in an open disc $0<|q|<\rho_n$ while the left-hand side expands as a Laurent series in the region $|q|>\rho_n$.

\begin{proof}
Recall from \cref{sec-metallicdef} that
\begin{equation*}
	\Phi_n(q)=\frac{1}{2q}\left(R_n(q)+\sqrt{P_n(q)}\right)
\end{equation*}
where $R_n(q)=q[n]_q+(q^n+1)(q-1)$ and $P_n(q)$ is a palindrome of degree $2n+2$. We compute easily
\begin{equation*}
R_n\Bigl(\fr 1{q}\Bigr)=\fr 1{q^{n+1}}(R_n(q)+2(1+q^n)(1+q)),
\end{equation*}
hence
\begin{equation*}
\Phi_n\Bigl(\fr 1{q}\Bigr)=\frac{q}{2}\biggl(\fr{R_n(q)+2(1+q^n)(1+q)+\sqrt{P_n(q)}}{q^{n+1}}\biggr)
\end{equation*}
and \eqref{phin1/q} follows.
\end{proof}

%%%%%%%%%%%%%%%%%%%%%%%%%%%%%%%%%%%%%%%%%%%%%%%%%%%%%%%%%%%%%%%%%%%
\section{Logarithmic behaviour}\label{sec-log}
%%%%%%%%%%%%%%%%%%%%%%%%%%%%%%%%%%%%%%%%%%%%%%%%%%%%%%%%%%%%%%%%%%%

Recall that a sequence $(x_l)_{l≥l_0}$ of real numbers is called \emph{log-convex} if 
\begin{equation}\label{log-cvx}
x_l^2≤x_{l-1}\,x_{l+1}
\end{equation} 
for all $l≥l_0$, and \emph{log-concave} if 
\begin{equation}\label{log-ccv}
x_l^2≥x_{l-1}\,x_{l+1}
\end{equation}
for all $l≥l_0$. It seems that log-convexity and log-concavity properties are studied exclusively for sequences of \emph{non-negative} numbers (despite our efforts, we were not able to find a more general example in the literature). In such a case, a sequence $(x_l)$ is log-convex (respectively log-concave) if and only if the sequence $(\log x_l)$ is convex (respectively concave), and this explains the vocabulary. In the sequel, we deal with numbers that can be negative, hence with no natural logarithm, but we keep the same vocabulary.

Also, when the $x_l$ are positive numbers, the log-convexity (respectively log-concavity) of the sequence $(x_l)$ means that the sequence of consecutive ratios $x_{l+1}/x_l$ is monotonically increasing (respectively decreasing), but this is not true anymore without the assumption of positivity.

However, we can make the following basic observations, which are straightforward.

\begin{lemme} Let $(x_l)$ a sequence of non-negative numbers, and let $y_l=(-1)^l x_l$.
\begin{enumerate}
\item The sequence $(y_l)$ is log-convex (respectively log-concave) if and only if the sequence $(x_l)$ is log-convex (respectively log-concave).
\item If the numbers $y_l$ are  nonzero, the sequence $(y_l)$ is log-convex (respectively log-concave) if and only if the sequence of consecutive ratios $y_{l+1}/y_l$ is monotonically decreasing (respectively increasing). That is, the characterisation  is reversed when compared with the case of the sequence of positive numbers $(x_l)$.
\end{enumerate}
\end{lemme}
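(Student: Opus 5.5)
The plan is to compute directly with $y_l=(-1)^l x_l$ and use nothing beyond the definitions \eqref{log-cvx} and \eqref{log-ccv}.

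For part (1), observe that $y_l^2=x_l^2$ and
\begin{equation*}
y_{l-1}y_{l+1}=(-1)^{2l}x_{l-1}x_{l+1}=x_{l-1}x_{l+1}.
\end{equation*}
So the inequality $y_l^2\le y_{l-1}y_{l+1}$ holds exactly when $x_l^2\le x_{l-1}x_{l+1}$, for each $l$. The same is true with the reversed inequality. Hence both notions transfer between $(x_l)$ and $(y_l)$ index by index. No sign hypothesis is needed here; non-negativity of $x_l$ only matters for part (2).

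For part (2), assume every $y_l\neq 0$, so every $x_l>0$. Write $r_l:=y_{l+1}/y_l=-x_{l+1}/x_l<0$. Since $y_l y_{l-1}=-x_lx_{l-1}<0$, dividing $y_l^2\le y_{l-1}y_{l+1}$ by $y_ly_{l-1}$ reverses the inequality. This gives
\begin{equation*}
\frac{y_l}{y_{l-1}}\ge\frac{y_{l+1}}{y_l},
\end{equation*}
that is, $r_{l-1}\ge r_l$. So log-convexity of $(y_l)$ is equivalent to $(r_l)$ being decreasing. The log-concave case follows in the same way with the inequalities reversed, and gives $(r_l)$ increasing.

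As a cross-check, the contrast with the positive case is immediate: $x_{l+1}/x_l=-r_l$, so "$x_{l+1}/x_l$ increasing" is the same as "$r_l$ decreasing".

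The only point requiring care is the sign of $y_ly_{l-1}$ when dividing. It is negative because consecutive terms $y_{l-1},y_l$ have opposite signs, and this is where $x_l>0$ is used. There is no real obstacle beyond this.
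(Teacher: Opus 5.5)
Your proof is correct: part (1) follows from $y_l^2=x_l^2$ and $y_{l-1}y_{l+1}=x_{l-1}x_{l+1}$, and part (2) follows from dividing by the negative quantity $y_ly_{l-1}$. The paper gives no proof and only calls these observations straightforward, so your direct computation from the definitions is exactly the verification it leaves to the reader.
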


Both facts can be used to deal with the sequence $(\ka_l(\phi_1))$ of the coefficients  of the $q$-golden ratio $[\phi_1]_q$. Indeed, recall from the introduction that these numbers are related to the the RNA secondary structure sequence $(a_l)$ the formula
\begin{equation*}
\ka_l(\phi_1)=(-1)^l a_{l-1}\quad (l≥2).
\end{equation*}
Since it is known that $(a_l)$ is a log-convex sequence (\cite{DSV04}, Theorem~2.6), we immediately get:

\begin{proposition}
The sequence $(\ka_l(\phi_1))_{l≥6}$ is log-convex.
\end{proposition}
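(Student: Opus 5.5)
The plan is to transfer the known log-convexity of the RNA secondary structure sequence $(a_l)$ to $(\ka_l(\phi_1))$ through the sign-alternating relation \eqref{G=A}, using part (1) of the preceding lemma. Concretely, I take $x_l:=a_{l-1}$ for $l\ge 2$. This is a sequence of nonnegative (indeed positive) integers, since $a_l$ counts secondary structures. By \eqref{G=A} we then have $\ka_l(\phi_1)=(-1)^l x_l$ for every $l\ge 2$.

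Next, the log-convexity condition \eqref{log-cvx} for $(\ka_l(\phi_1))$ at an index $l$ only involves the indices $l-1,l,l+1$. For every $l\ge 6$ these indices are at least $5\ge 2$, so \eqref{G=A} applies to all three terms. Moreover $(-1)^{2l}=(-1)^{(l-1)+(l+1)}=1$, so the inequality $\ka_l(\phi_1)^2\le \ka_{l-1}(\phi_1)\ka_{l+1}(\phi_1)$ is literally equivalent to $a_{l-1}^2\le a_{l-2}a_l$. This equivalence is exactly the content of part (1) of the lemma, applied to the sequences $(x_l)_{l\ge 6}$ and $(y_l)=(\ka_l(\phi_1))_{l\ge 6}$.

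It remains to check that $a_{l-1}^2\le a_{l-2}a_l$ holds for every $l\ge 6$, that is, $a_m^2\le a_{m-1}a_{m+1}$ for all $m\ge 5$. This is covered by Theorem~2.6 of \cite{DSV04}, which asserts log-convexity of $(a_l)_{l\ge 6}$. The only point needing care, and the main potential obstacle, is this index bookkeeping. The shift by one in \eqref{G=A} means the required range for $(a_l)$ starts at $m=5$ rather than $6$. I therefore need either to read the range in \cite{DSV04} as covering it, or to check the single extra inequality $a_5^2\le a_4a_6$ by hand using the listed values $a_4=4$, $a_5=8$, $a_6=17$. Indeed $64\le 68$ holds, so the boundary case is fine either way. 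Combining these steps yields log-convexity of $(\ka_l(\phi_1))_{l\ge 6}$.
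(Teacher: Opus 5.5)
Your proof is correct and follows the paper's route: combine $\ka_l(\phi_1)=(-1)^l a_{l-1}$ with part (1) of the lemma and the log-convexity of $(a_l)$ from Theorem~2.6 of \cite{DSV04}. Your explicit check of the boundary case $a_5^2=64\le a_4a_6=68$, which the index shift makes necessary, is a detail the paper passes over silently.
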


As concerns the log-convexity of the coefficients of $[\phi_n]_q$ for $n≥2$ we are unable to state a result or even to give a precise conjecture: computer experimentations suggest that, for $2≤n≤100$, the sequence of coefficients $\ka_l(\phi_n)$ is \emph{log-concave} (starting from  some rank $l_0$ depending on $n$), \emph{with a unique exception for the case $n=19$} for which the sequence seems neither log-convex nor log-concave. To be precise, if $2≤n≤100$ and $n\not=19$, relation \eqref{log-ccv} is checked for all $l_0≤l≤5000$, while if $n=19$, both \eqref{log-cvx} and \eqref{log-ccv} occur, at least when $l≤10000$. This exceptional case is a very curious phenomenon for which we have no explanation so far, except that the sequence could still be  log-concave starting from some $l>10000$.
Also, apart from case $n=1$, which amounts to consider the sequence $(a_l)$,  absolute values of the coefficients do not seem to satisfy log-convexity or log-concavity properties. 

We can explain our difficulties to study cases $n≥2$ by the fact, already mentioned, that the combinatorial world essentially deals with non-negative numbers, and that we miss available techniques for situations without this assumption.

%%%%%%%%%%%%%%%%%%%%%%%%%%%%%%%%%%%%%%%%%%%%%%%%%%%%%%%%%%%%%%%%%%%%
% Bibliography
%%%%%%%%%%%%%%%%%%%%%%%%%%%%%%%%%%%%%%%%%%%%%%%%%%%%%%%%%%%%%%%%%%%%
\newcommand{\etalchar}[1]{$^{#1}$}
\newcommand\andname{and}
\newcommand\Inname{In}
\newcommand\editionname{edition}
\newcommand\pagesname{pp.}
\providecommand{\bysame}{\leavevmode\hbox to3em{\hrulefill}\thinspace}

\end{document}